\newcommand{\esssup}{\mathop{\mathrm{ess\,sup}}} 
\newcommand{\ltwo}[2]{\|{#1}\|_{#2}}
\newcommand{\ud}{\mathrm{d}}
\newcommand{\ndg}[1]{| \kern -.25mm \|{#1}| \kern -.25mm \|}
\DeclareMathOperator\diam{\mathrm{diam}}
\begin{document}
\pagestyle{myheadings}
\markboth{A.~CANGIANI, E.~H.~GEORGOULIS, I.~KYZA AND S.~METCALFE}
{ADAPTIVITY AND BLOW-UP DETECTION}

\title{Adaptivity and blow-up detection for \\ nonlinear evolution problems}
\author{Andrea Cangiani\thanks{Department of Mathematics, University of Leicester, University Road, Leicester LE1 7RH, United Kingdom} \and Emmanuil H.~Georgoulis\footnotemark[1] \and Irene Kyza\thanks{Department of Mathematics, University of Dundee, Nethergate, Dundee, DD1 4HN, United Kingdom}  \and Stephen Metcalfe\footnotemark[1]}

\maketitle

\begin{abstract}
{
This work is concerned with the development of a space-time adaptive numerical method, based on a rigorous a posteriori error bound, for a semilinear convection-diffusion problem which may exhibit blow-up in finite time. More specifically, a posteriori error bounds are derived in the $L^{\infty}(L^2)+L^2(H^1)$-type norm for a first order in time implicit-explicit (IMEX) interior penalty discontinuous Galerkin (dG) in space discretization of the problem, although the theory presented is directly applicable to the case of conforming finite element approximations in space.  The choice of the discretization in time is made based on a careful analysis of adaptive time stepping methods for ODEs that exhibit finite time blow-up. The new adaptive algorithm is shown to accurately estimate the blow-up time of a number of problems, including one which exhibits regional blow-up.
}
\end{abstract}
\begin{keywords}
finite time blow-up; conditional a posteriori error estimates; IMEX method; discontinuous Galerkin methods
\end{keywords}

\section{Introduction}

The numerical approximation of blow-up phenomena in partial differential equations (PDEs) is a challenging problem due to the high spatial and temporal resolution needed close to the blow-up time. Numerical methods giving good approximations close to the blow-up time include the rescaling algorithm of Berger and Kohn \cite{BK88,NZ14} and the MMPDE method \cite{BHR96,HMR08}. There is also work looking at the numerical approximation of blow-up in the  nonlinear Schr\"{o}dinger equation and its generalizations \cite{ADKM03,CS02,FI03,KMR11,Plex,TS92}. Other numerical methods for approximating blow-up can be found for a variety of different nonlinear PDEs \cite{ADR,ABN09,DPMF05,DKKV98,FGR02,G1,NB11} and ordinary differential equations (ODEs) \cite{H06,JW14, SF90}. Typically, these numerical methods rely on some form of theoretically justified rescaling but lack a rigorous justification as to whether the resulting numerical approximations are reasonable.  In contrast, our approach is to perform numerical rescaling of a simple numerical scheme in an adaptive space-time setting driven by rigorous a posteriori error bounds.

A posteriori error estimators for finite element discretizations of nonlinear para-bolic problems are available in the literature (e.g., \cite{B05, BM11,BBM05,CEGST14,DGN07,GM14,KNS04, V98f,V98e,V04}).  However, the literature on a posteriori error control for parabolic equations that exhibit finite time blow-up is very limited; to the best of our knowledge, only in \cite{K01} do the authors provide rigorous a posteriori error bounds for such problems. Using a semigroup approach, the authors of \cite{K01} arrive to \emph{conditional} a posteriori error estimates in the $L^\infty(L^\infty)$-norm for first and second order temporal semi-discretizations of a semilinear parabolic equation with polynomial nonlinearity. Conditional a posteriori error estimates have been derived in earlier works for several types of PDEs, see, e.g., \cite{CM,Fierro,KNS04,LaNo,MN2}; the estimates are called conditional because they only hold under a computationally verifiable smallness condition.

In this work, we derive a practical conditional a posteriori bound for a fully-discrete first order in time implicit-explicit (IMEX) interior penalty discontinuous Galerkin (dG) in space discretization of a non self-adjoint semilinear parabolic PDE with quadratic nonlinearity. The choice of an IMEX discretization and, in particular, the explicit treatment of the nonlinearity, offers advantages in the context of finite time blow-up {\bf --} this is highlighted below via the discretization of the related ODE problem with various time-stepping schemes. The choice of a dG method in space offers stability of the spatial operator in convection-dominated regimes on coarse meshes; we stress, however, that the theory presented below is directly applicable to the case of conforming finite element approximations in space. The conditional a posteriori error bounds are derived in the $L^{\infty}(L^2)+L^{2}(H^1)$-type norm. The derivation is based on energy techniques combined with the Gagliardo-Nirenberg inequality while retaining the key idea introduced in \cite{K01} {\bf --} judicious usage of Gronwall's lemma. A key novelty of our approach is the use of a \emph{local-in-time} continuation argument in conjunction with a space-time reconstruction. Global-in-time continuation arguments have been used to derive conditional a posteriori error estimates for finite element discretizations of PDEs with globally bounded solutions, cf. \cite{B05,GM14,KNS04}. A useful by-product of the local continuation argument used in this work is that it gives a natural stopping criterion for approach towards the blow-up time. The use of space-time reconstruction, introduced in \cite{LM06,MN03} for conforming finite element methods and in \cite{CGM14,GLV11} for dG methods, allows for the derivation of a posteriori bounds in norms weaker than $L^2(H^1)$ and offers great flexibility in treating general spatial operators and their respective discretizations.  

Furthermore, a space-time adaptive algorithm is proposed which uses the conditional a posteriori bound to control the time step lengths and the spatial mesh modifications. The adaptive algorithm is a non-trivial modification of typical adaptive error control procedures for parabolic problems. In the proposed adaptive algorithm, the tolerances are adapted in the run up to blow-up time to allow for larger absolute error in an effort to balance the relative error of the approximation. The space-time adaptive algorithm is tested on three numerical experiments, two of which exhibit point blow-up and one which exhibits regional blow-up. Each time the algorithm appears to detect and converge to the blow-up time without surpassing it. 

The remainder of this work is structured as follows. In Section \ref{ODE_sec}, we discuss the derivation of a posteriori bounds and algorithms for adaptivity for ODE problems whose solutions blow-up in finite time. Section \ref{prelim} sets out the model problem and introduces some necessary notation while Section \ref{dg_section} discusses the discretization of the problem. Within Section \ref{apost_bound_sec} the proof of the conditional a posteriori error bound is presented. An adaptive algorithm based on this a posteriori bound is proposed in Section \ref{adaptive_sec} followed by a series of numerical experiments in Section \ref{numer_exp_sec}. Finally, some conclusions are drawn in Section \ref{conclusions_sec}.

\section{Approximation of blow-up in ODEs}\label{ODE_sec}

Before proceeding with the a posteriori error analysis and adaptivity of the semilinear PDE, 
it is illuminating to consider the numerical approximation of blow-up in the context of ODEs. To this end, we first analyse the ODE initial value problem: find $u:[0,T]\to\mathbb{R}$ such that
\begin{equation}
\begin{aligned}
\label{ODE}
\frac{du}{dt} & = f(u) := \sum_{j=0}^{p} \alpha_j u^j, \quad \text{ in } (0,T], \\
u(0) & = u_0,
\end{aligned}
\end{equation}
with $p \geq 2$ a positive integer and coefficients $\alpha_i \geq 0$, $i=1,\dots,p-1$ and $\alpha_p > 0$ so that the solution blows up in finite time. Let $T^*$ denote the blow-up time of \eqref{ODE} and assume $T<T^*$. For $t<T^*$, $u(t)$ is a differentiable function \cite{Hu}. 

Let $0\le t^{k}\le T $, $0\le k\le N$ be defined by $t^{k} := t^{k-1} + \tau_{k}$ with $t^0 := 0$ and $t^N=T$ for some time step lengths $\tau_{k}>0$, $k=1,\dots,N$ with $\sum_{k=1}^N \tau_k =T$. We use the following three different one step schemes to approximate \eqref{ODE}. We set $U^0 := u_0$ and, for $k=1,\dots,N$,  we seek $U^{k}$ such that
\begin{equation}
\begin{aligned}
\label{ODEapprox}
\frac{U^{k} - U^{k-1}}{\tau_{k}} = F (U^{k-1},U^{k} ),
\end{aligned}
\end{equation}
with $F$ one of the following three classical approximations of $f$:
\begin{equation}
\label{differenceschemes}
\begin{aligned}
\mbox{Explicit Euler} \qquad F (U^{k-1},U^{k} )& =f (U^{k-1} ), \\ 
\mbox{Implicit Euler} \qquad F (U^{k-1},U^{k} )& =f (U^{k} ), \\ 
\mbox{Improved Euler} \qquad F (U^{k-1},U^{k} )& =1/2\left(f (U^{k-1} )+f \left(U^{k-1}+\tau_{k}f(U^{k-1})\right) \right).
\end{aligned}
\end{equation}

\subsection{An a posteriori error estimate}

We begin by defining $U:[0,T]\to \mathbb{R}$ by
 \begin{equation}
 \label{piecewiselinear}
 U(t):=\ell_{k-1}(t)U^{k-1}+\ell_{k}(t)U^{k},\qquad t\in (t^{k-1},t^{k} ],
 \end{equation}
where $\{\ell_{k-1}, \ell_{k}\}$ denotes the standard linear Lagrange interpolation basis defined on the interval $[t^{k-1},t^{k}]$, i.e., $U$ is the continuous piecewise linear interpolant through the points $(t^k,U^{k})$, $k=0,\dots,N$. Hence, \eqref{ODEapprox} can be equivalently written on each interval $(t^{k-1},t^{k}]$ as
 \begin{equation}
 \label{ODEapprox2}
\frac{dU}{dt}=F (U^{k-1},U^{k}).
 \end{equation}
Therefore, on each interval $(t^{k-1},t^{k}]$, the error $e:=u-U$ satisfies the equation
\begin{equation}
\label{ODEerror_one}
\frac{de}{dt} = f(u) - F (U^{k-1},U^{k})=f(U)+f'(U)e+\sum_{j=2}^p\frac{f^{(j)}(U)}{j!}e -  F (U^{k-1},U^{k}),
\end{equation}
with $f^{(j)}$ denoting the order $j$ derivative of $f$. Thus, upon defining the residual $\eta_{k} := f(U) - F (U^{k-1},U^{k})$, we obtain on each interval $(t^{k-1},t^{k}]$ the primary error equation:
\begin{equation}
\begin{aligned}
\label{ODEerror2}
\frac{de}{dt} = \eta_{k} + f'(U)e + \sum_{j=2}^p \frac{f^{(j)}(U)}{j!}e^j.
\end{aligned}
\end{equation}
Gronwall's inequality, therefore, implies that
\begin{equation}\label{ODEbound1}
|e(t)| \leq H_{k}(t)G_{k}\phi_{k},
\end{equation}
where
\begin{equation}
\label{boundnotation}
\begin{aligned}
H_{k}(t) & := \exp \bigg (\sum_{j=2}^p \int_{t^{k-1}}^{t} \! \frac{|f^{(j)}(U)|}{j!}|e|^{j-1} \, \ud s \bigg ), \\
G_{k} & := \exp \bigg(\int_{t^{k-1}}^{t^{k}} \! |f'(U)| \, \ud s \bigg), \\
\phi_{k} & :=  |e(t^{k-1})| + \int_{t^{k-1}}^{t^{k}} \! |\eta_{k}| \, \ud s.
\end{aligned}
\end{equation}
From~\eqref{ODEbound1}, we derive an a posteriori bound by a \emph{local continuation argument}. To this end, we define the set
\begin{equation}\label{subinterval}
\begin{aligned}
I_{k} := \Big\{t \in [t^{k-1},t^{k}] :  \max_{s \in [t^{k-1},t]} |e(s)| \leq \delta_{k}G_{k}\phi_{k}\Big\},
\end{aligned}
\end{equation}
for some $\delta_{k}>1$ to be chosen below; note that $I_k\subset[t^{k-1},t^{k}]$ and that $I_k$ is closed since the error function $e$ is continuous. The main idea is to use the continuity of the error function $e$ and to choose $\delta_{k}$ implying that  $I_{k}=[t^{k-1},t^{k}]$ for each $k=1,\dots, N$. Further, we will choose $\delta_{k}$ to be a computable bound of $H_{k}$ thereby arriving to an a posteriori bound.

\begin{theorem}[Conditional a posteriori estimate]\label{apostest1}
Let $u$ be the exact solution of \eqref{ODE}, $\{U^k\}_{k=0}^N$ the approximations produced by \eqref{ODEapprox} and $U$ the piecewise linear interpolant \eqref{piecewiselinear}. Then, for $k=1,\dots,N$, the following a posteriori estimate holds:
\begin{equation}
\begin{aligned}
\label{aposterrorest}
\max_{t\in[t^{k-1},t^{k}]}|e(t)|  \leq \delta_{k}G_{k}\phi_{k},
\end{aligned}
\end{equation}
provided that $\delta_{k}>1$ is chosen so that
\begin{equation}
\label{deltaequation}
\sum_{j=2}^p (\delta_{k}G_{k}\phi_{k})^{j-1}\int_{t^{k-1}}^{t^{k}} \! \frac{|f^{(j)}(U(s))|}{j!} \, \ud s - \log(\delta_{k}) = 0.
\end{equation}
\end{theorem}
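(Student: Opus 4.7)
My plan is to close the Gronwall-type inequality \eqref{ODEbound1} into an explicit a posteriori bound via the local continuation argument indicated around \eqref{subinterval}. First I would verify \eqref{ODEbound1}: rewriting the nonlinear terms of \eqref{ODEerror2} as $\bigl(\sum_{j=2}^p \tfrac{f^{(j)}(U)}{j!} e^{j-1}\bigr) e$ recasts the error equation in the linear form $\tfrac{d}{dt}|e|\le A(t)|e|+|\eta_k|$ almost everywhere, with an implicit coefficient $A$ depending on $e$; the scalar Gronwall inequality then produces \eqref{ODEbound1} with $H_k,G_k,\phi_k$ as in \eqref{boundnotation}, the important point being that $G_k$ is already the exponential over the \emph{entire} subinterval $[t^{k-1},t^k]$ while $H_k$ remains implicit in $|e|$.

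Next I would carry out the continuation on the closed set $I_k$ from \eqref{subinterval}. Since $|e(t^{k-1})|\le\phi_k\le G_k\phi_k\le\delta_k G_k\phi_k$, we have $t^{k-1}\in I_k$, so $I_k$ is nonempty, and I set $t^\star:=\sup I_k$. Arguing by contradiction, suppose $t^\star<t^k$. For $t\in[t^{k-1},t^\star]$, membership in $I_k$ permits one to bound $H_k(t)$ by substituting $|e|\le\delta_k G_k\phi_k$ inside its defining integrals, obtaining $H_k(t)\le\exp\bigl(\sum_{j=2}^p(\delta_k G_k\phi_k)^{j-1}\int_{t^{k-1}}^{t}\tfrac{|f^{(j)}(U(s))|}{j!}\,\ud s\bigr)$. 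The defining equation \eqref{deltaequation} asserts that this expression equals $\delta_k$ when $t=t^k$; for $t<t^k$ the integral is strictly smaller (the integrands cannot vanish identically, since otherwise \eqref{deltaequation} would collapse to $-\log\delta_k=0$, contradicting $\delta_k>1$), so $H_k(t^\star)<\delta_k$. Combining with \eqref{ODEbound1} yields $|e(t^\star)|<\delta_k G_k\phi_k$, and continuity of $e$ then furnishes an $\varepsilon>0$ with $|e(t)|\le\delta_k G_k\phi_k$ on $[t^\star,t^\star+\varepsilon]\cap[t^{k-1},t^k]$, contradicting $t^\star=\sup I_k$. Hence $t^\star=t^k$ and \eqref{aposterrorest} follows.

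The step I expect to require the most care is securing the \emph{strict} inequality $H_k(t^\star)<\delta_k$ that drives the continuation, because \eqref{ODEbound1} itself is only a non-strict bound. The resolution is that \eqref{deltaequation} is posed over the full subinterval $[t^{k-1},t^k]$, whereas on $[t^{k-1},t^\star]$ one integrates over a strictly smaller set; the degenerate case in which all $|f^{(j)}(U)|$, $j\ge 2$, vanish identically is excluded by $\delta_k>1$ (and in fact ruled out \emph{a priori} by $\alpha_p>0$, which forces $f^{(p)}\equiv p!\,\alpha_p>0$). A minor technical point I would also flag is the legitimacy of the differential Gronwall step applied to $|e|$, which is handled by invoking that $|e|$ is locally absolutely continuous on each $(t^{k-1},t^k)$ with $\tfrac{d}{dt}|e|\le|e'|$ a.e.
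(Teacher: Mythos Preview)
Your proposal is correct and follows essentially the same continuation argument on the set $I_k$ as the paper. The one noteworthy difference lies in how you secure the strict inequality $H_k(t^\star)<\delta_k$ needed to extend past $t^\star$: the paper introduces an auxiliary parameter $\alpha\in(0,1)$, works with the relaxed condition $\exp\bigl(\sum_{j=2}^p(\delta_k G_k\phi_k)^{j-1}\int_{t^{k-1}}^{t^k}\tfrac{|f^{(j)}(U)|}{j!}\,\ud s\bigr)\le(1-\alpha)\delta_k$ to obtain strictness uniformly, proves $I_k=[t^{k-1},t^k]$ for each $\alpha>0$, and only afterwards passes to the limit $\alpha\to 0$ to recover \eqref{deltaequation}. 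You instead work directly with \eqref{deltaequation} and extract strictness from the observation that, under the standing assumption $\alpha_p>0$, one has $f^{(p)}\equiv p!\,\alpha_p>0$, so the integral over $[t^{k-1},t^\star]$ with $t^\star<t^k$ is strictly smaller than over $[t^{k-1},t^k]$. Your route is marginally more direct and exploits the specific structure of the problem (the strict positivity of the leading coefficient), whereas the paper's $\alpha$-device is more robust in that it would carry over unchanged to settings where the integrands might vanish on subintervals; both are perfectly valid here.
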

\begin{proof}
For $k=1,\dots,N$, let $I_{k}$ be as in \eqref{subinterval} where $\delta_{k}>1$ is chosen to satisfy 
\begin{equation}
\label{deltacondition2}
\exp\bigg(\sum_{j=2}^p(\delta_{k}G_{k}\phi_{k})^{j-1}\int_{t^{k-1}}^{t^{k}}\frac{|f^{(j)}(U(s))|}{j!}
\,\ud s\bigg)
\le(1-\alpha)\delta_{k},
\end{equation}
for some $0<\alpha<1$.
The interval $I_{k}$ is closed and non-empty since $t^{k-1}\in I_{k}$; hence, it attains a maximum $t_*^{k}:=\max I_{k}$. Suppose that $t^{k}_*<t^k$. In view of the definition of $H_{k}$, we have
\begin{equation}
\label{Hbound}
H_{k}(t_*^{k}) \leq \exp\bigg( \sum_{j=2}^p (\delta_{k}G_{k}\phi_{k})^{j-1}\int_{t^{k-1}}^{t^{k}} \! \frac{|f^{(j)}(U(s))|}{j!} \, \ud s \bigg)\le (1-\alpha)\delta_{k}<\delta_k,
\end{equation}
as $t_*^{k}\in I_{k}$.  Application of~\eqref{Hbound} to \eqref{ODEbound1} yields
\begin{equation}
\label{estcond1}
\max_{t\in[t^{k-1},t^{k}_*]}|e(t)|<\delta_{k}G_{k}\phi_{k}.
\end{equation}
This implies that $t^{k}_*$ cannot be the maximal element of $I_k$ {\bf --} a contradiction. Hence, $t^{k}_*=t^{k}$ and, thus, $I_{k}= \big[t^{k-1},t^{k} \big]$. Considering the case with equality in \eqref{deltacondition2} and taking $\alpha\to 0$, we arrive at \eqref{deltaequation} and the proof is complete.
\end{proof}

Choosing $\delta_k>1$ satisfying \eqref{deltaequation} is equivalent to finding a root (ideally the smallest one) of
$$\sum_{j=2}^p\bigg((G_k\phi_k)^{j-1}\int_{t^{k-1}}^{t^k}\frac{f^{(j)}\left(U(s)\right)}{j!}\, \ud s\bigg)x^{j-1}-\log(x),$$
in the interval $(1,+\infty)$.
This is only possible if the coefficients of $x^j$, $j=1,\ldots,p-1$, are ``sufficiently small'', i.e., only provided that the time steps length $\tau_k$ is small enough.  In this sense, \eqref{aposterrorest} is a conditional a posteriori bound. In practice, condition \eqref{deltaequation} is implemented by applying Newton's method. If for some $k$ the time step length $\tau_k$ is not small enough, Newton's method does not converge and the procedure terminates (cf. Algorithms~1 and~2 below). With the aid of the next lemma, we state a precise condition on the time step lengths $\tau_k$ which indeed ensures that \eqref{deltaequation} has a root $\delta_k>1.$
\begin{lemma}
If $\sum_{j=1}^pj C_j\mathrm{e}^j \le 1$ then $s(x) = \sum_{j=1}^p C_j x^j-\log(x)$ with $C_j> 0$, $j=1,\ldots,p$, $p\in\mathbb{N}$ has a root in $(1,+\infty).$
\end{lemma}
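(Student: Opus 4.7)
The plan is to apply the Intermediate Value Theorem to $s$ on the closed interval $[1,e]$. Since $s$ is continuous (indeed smooth) on $(0,+\infty)$, it will suffice to show $s(1)>0$ and $s(e)\le 0$; the sign change then forces a root in $(1,e]\subset(1,+\infty)$. The point $e$ is the natural test location because it is precisely where the hypothesis lives and where $\log(x)$ takes the clean value $1$.

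For the left endpoint, since $\log(1)=0$ and each $C_j$ is strictly positive, one immediately reads off
\[
s(1)=\sum_{j=1}^p C_j>0.
\]
For the right endpoint, the key observation I would use is that $j\ge 1$ in every term of the sum, so that the factor $j$ can be discarded for free:
\[
\sum_{j=1}^p C_j e^j \;\le\; \sum_{j=1}^p j C_j e^j \;\le\; 1,
\]
where the last inequality is precisely the hypothesis. Since $\log(e)=1$, this gives $s(e)\le 0$.

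Combining $s(1)>0$, $s(e)\le 0$ and continuity, the IVT supplies a root $x^{\ast}\in[1,e]$; since $s(1)\ne 0$ necessarily $x^{\ast}\in(1,e]\subset(1,+\infty)$, as claimed. I do not anticipate any real obstacle: the argument is essentially a one-line IVT application, and the only mildly clever step is recognising that the weighted bound in the hypothesis dominates the unweighted sum needed to control $s(e)$. An alternative route, should one wish to identify the root more precisely, would be to analyse $s'(x)=\sum_{j=1}^p jC_jx^{j-1}-1/x$ and locate its unique minimiser $x_0\ge e$, observing that $s(x_0)\le 1-\log(x_0)\le 0$; but this gives no stronger existence conclusion than the direct IVT argument above.
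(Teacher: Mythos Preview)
Your proof is correct and is in fact cleaner than the paper's. You apply the Intermediate Value Theorem directly on $[1,e]$, using the simple observation $\sum_{j} C_j e^j \le \sum_{j} j C_j e^j \le 1$ to get $s(e)\le 0$. The paper instead argues indirectly: it notes that $s(1)>0$ and $s(x)\to+\infty$, so a root exists if and only if $s$ attains a nonpositive value somewhere; it then computes $s'$, uses the hypothesis to show $s'(e)\le 0$, locates a critical point $x_*\ge e$ satisfying $\sum_j jC_jx_*^j=1$, and from this deduces $s(x_*)\le 1-\log(x_*)\le 0$. Amusingly, this is exactly the ``alternative route'' you sketched at the end of your proposal. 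The paper's argument pins down the minimiser of $s$ and hence implicitly localises a root at or beyond $x_*\ge e$, which could be useful if one cared about the \emph{smallest} root (as the surrounding discussion in the paper does), but for the bare existence statement your direct IVT argument on $[1,e]$ is shorter and loses nothing.
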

\begin{proof}
We begin by noting that $s(x)$ is continuous in $[1,+\infty)$ and differentiable in $(1,+\infty)$ with $s(1)=\sum_{j=1}^pC_j>0$ and $\lim_{x\to+\infty} s(x)=+\infty.$ Therefore $s(x)$ has a root in $(1,+\infty)$ if and only if it attains a nonpositive minimum in this interval. Differentiating $s(x)$ gives $s'(x)=\sum_{j=1}^p j C_j x^{j-1}-x^{-1}.$ Since the coefficients  $C_j$ satisfy $\sum_{j=1}^pj C_j\mathrm{e}^j \le 1$, we observe that
$$s'(\mathrm{e})=\sum_{j=1}^pjC_j\mathrm{e}^{j-1}-\mathrm{e}^{-1}\le 0.$$
Also, $\lim_{x\to +\infty}s'(x)>0.$ Hence, there exists a critical point $x_*\in[\mathrm{e},+\infty)$ satisfying 
\begin{equation}
\label{criticalpoint}
\sum_{j=1}^p jC_jx_*^j=1.
\end{equation}
All that remains is to prove that $s(x_*)\le 0$. Indeed, \eqref{criticalpoint} leads to 
$$\sum_{j=1}^pC_jx_*^j\le \sum_{j=1}^pj C_jx_*^j=1\le\log(x_*),$$
where the last inequality holds because $x_*\ge \mathrm{e}$. From the above relation, we readily conclude that $s(x_*)\le 0$ and the proof is complete.
\end{proof}

The above lemma gives a sufficient condition on when \eqref{deltaequation} can be satisfied. In particular, condition \eqref{deltaequation} can always be made to be satisfied provided that the time step length $\tau_k$ is chosen such that 
$$\sum_{j=2}^p\frac{j-1}{j!}\left(G_k\phi_k \mathrm{e}\right)^{j-1}\int_{t^{k-1}}^{t^k}{|f^{(j)}\left(U(s)\right)|} \, \ud s \le1.$$ 

Returning back to Theorem~\ref{apostest1}, we note that this gives a recursive procedure for the estimation of the error on each subinterval $[t^{k-1},t^{k}]$. Indeed, the term $|e(t^{k-1} )|$ in $\phi_{k}$ is estimated using the error estimator from the previous time step with $e(0) = 0$.

\subsection{Adaptivity}
 Based upon the a posteriori  error estimator presented in Theorem~\ref{apostest1}, we propose Algorithm 1 for advancing towards the blow-up time.

\begin{algorithm} \label{ODEalgorithm1}
  \begin{algorithmic}[1]
     \State {\bf Input:} $f$, $F$, $u_0$, $\tau_1$, ${\tt tol}$.
     \State  Compute $U^1$ from $U^0$.
     \While {$\displaystyle \int_{t^0}^{t^{1}} \! |\eta_{1}| \, \ud s > {\tt tol}$}
     \State $\tau_1 \leftarrow \tau_1/2$.
     \State Compute $U^1$ from $U^0$.
\EndWhile
\State Compute $\delta_{1}$.
\State Set $k = 0$.
     \While {$\delta_{k+1}$ exists}
     \State $k \leftarrow k+1$.
     \State $\tau_{k+1} = \tau_k$.
     \State Compute $U^{k+1}$ from $U^k$.
     \While {$\displaystyle \int_{t^k}^{t^{k+1}} \! |\eta_{k+1}| \, \ud s > {\tt tol}$}
     \State $\tau_{k+1} \leftarrow \tau_{k+1}/2$.
     \State Compute $U^{k+1}$ from $U^k$.
\EndWhile
\State Compute $\delta_{k+1}$.
\EndWhile
\State {\bf Output:} $k$, $t^{k}$.
  \end{algorithmic}
  \caption{ODE Algorithm 1}
\end{algorithm}

Assuming that the adaptive algorithm outputs successfully at time $T=T({\tt tol},N)$ for a given tolerance ${\tt tol}$ and after total number of time steps $N$ then we are interested in observing the order of convergence as $T\to T^*$ with respect to $N$. To this end, we define the function
$ \lambda({\tt tol},N) := |T^* - T({\tt tol},N)|$ where $T^*$ is the blow-up time of \eqref{ODE} and we numerically investigate the rate $r>0$ such that 
\begin{equation}
\lambda({\tt tol},N) \propto N^{-r}.
\end{equation}
One may initially expect that $r$ would be equal to the order of the time-stepping scheme used. To gain insight into the rate convergence of $\lambda$, we apply Algorithm 1 to \eqref{ODE} with $f(u)=u^p$ for $p=2,3$ and $u(0)=1$ for each of the three time-stepping schemes \eqref{differenceschemes}. The computed rates of convergence $r$ under Algorithm 1 are given in Table \ref{data1}.
\begin{table}[ht]
\caption{Algorithm 1 Results} 
\centering 
\begin{tabular}{c c c c} 
\hline\hline 
Method &$ p = 2$ & $p = 3$ \\ 
\hline 
Implicit Euler & $r \approx 0.66$ & $r \approx 0.79$ \\ 
Explicit Euler & $r \approx 1.35$ & $r \approx 1.60$ \\
Improved Euler & $r \approx 1.2$ & $r \approx 1.48$ \\
\hline 
\end{tabular}
\label{data1}
\end{table}

Somewhat surprisingly at first sight, the explicit Euler scheme performs significantly better than the implicit Euler scheme. This fact can be explained by looking back at the derivation of the error estimator. The explicit Euler scheme always underestimates the true solution $u$ \cite{SF90}.

This, in turn, implies that $\delta_{k+1}$ is correcting for the fact that $G_{k+1}$ is underestimating the true blow-up rate resulting in a tight a posteriori error bound and, thus, explaining the high rate of convergence of $\lambda$. When using the implicit Euler method, on the other hand, $G_{k+1}$ overestimates the true blow-up rate \cite{SF90} thereby conferring no additional benefit. 

Note also that for both the implicit and improved Euler methods, the rate of convergence $r$  is less than their formal orders of convergence, i.e., first and second order, respectively. Moreover, one would expect a faster approach to the blow-up time using the second order improved Euler compared to the first order explicit Euler scheme. This unexpected behaviour is due to the way the tolerance is utilized in Algorithm 1. Indeed, Algorithm 1 aims to reduce the error under an absolute tolerance {\tt tol}; this is the standard practice in adaptive algorithms applied to linear problems. In the context of blow-up problems, however, the presence of the growth factor $G_{k+1}$ cannot be neglected; requiring the adaptivity to be driven by an absolute tolerance in the run up to the blow up time results in excessive over-refinement and, thus, loss of the expected rate of convergence. To address this issue, we propose Algorithm 2 which increases ${\tt tol}$ proportionally to $G_{k+1}$ allowing for control of the relative error (cf. line 19 in Algorithm 2).

\begin{algorithm} \label{ODEalgorithm2}
  \begin{algorithmic}[1]
     \State {\bf Input:} $f$, $F$, $u_0$, $\tau_1$, ${\tt tol}$.
     \State Compute $U^1$ from $U^0$.
     \While {$\displaystyle \int_{t^0}^{t^{1}} \! |\eta_{1}| \, \ud s > {\tt tol}$}
     \State $\tau_1 \leftarrow \tau_1/2$.
     \State Compute $U^1$ from $U^0$.
\EndWhile
\State Compute $\delta_{1}$.
\State ${\tt tol} = G_1*{\tt tol}.$
\State Set $k=0$.
     \While {$\delta_{k+1}$ exists}
     \State $k \leftarrow k+1$.
     \State $\tau_{k+1} = \tau_k$.
     \State Compute $U^{k+1}$ from $U^k$.
     \While {$\displaystyle \int_{t^k}^{t^{k+1}} \! |\eta_{k+1}| \, \ud s > {\tt tol}$}
     \State $\tau_{k+1} \leftarrow \tau_{k+1}/2$.
     \State Compute $U^{k+1}$  from $U^k$.
\EndWhile
\State Compute $\delta_{k+1}$.
\State ${\tt tol} = G_{k+1}*{\tt tol}.$
\EndWhile
\State {\bf Output:} $k$, $t^{k}$.
  \end{algorithmic}
  \caption{ODE Algorithm 2}
\end{algorithm}

\begin{table}[ht]
\caption{Algorithm 2 Results} 
\centering 
\begin{tabular}{c c c c} 
\hline\hline 
Method & $p = 2$ & $p = 3$ \\
\hline 
Implicit Euler & $r \approx 1.00$ & $r \approx 1.00$ \\ 
Explicit Euler & $r \approx 1.45$ & $r \approx 1.43$ \\
Improved Euler & $r \approx 2.03$ & $r \approx 2.03$ \\
\hline 
\end{tabular}
\label{data2}
\end{table}
The rates of convergence $r$ of $\lambda$ under Algorithm 2 are given in Table \ref{data2}. The theoretically conjectured orders of convergence for both the implicit and improved Euler schemes are recovered while the explicit Euler method still outperforms its expected rate. In the case $p=3$ (cubic nonlinearity) and for the explicit Euler method only,  Algorithm 1 converges somewhat faster than Algorithm 2. The reason for this behaviour is unclear and requires further investigation.

\section{Model problem}\label{prelim}

Let $\Omega \subset \mathbb{R}^2$ be the computational domain which is assumed to be a bounded  polygon with Lipschitz boundary $\partial\Omega$. We denote the standard $L^2$-inner product on $\omega\subseteq\Omega$ by $(\cdot,\cdot)_{\omega}$ and the standard $L^2$-norm by $\ltwo{\cdot}{\omega}$; when $\omega = \Omega$ these will be abbreviated to $(\cdot,\cdot)$ and $\ltwo{\cdot}{}$, respectively. We shall also make use of the standard Sobolev spaces $W^{k,p}(\omega)$ along with the standard notation $L^p(\omega)=W^{0,p}(\omega)$, $1\le p\le\infty$; $H^k(\omega):=W^{k,2}(\omega)$, $k\ge 0$; and $H^1_0(\Omega)$ denoting the subspace of $H^1(\Omega)$ consisting of functions vanishing on the boundary $\partial\Omega$. For $T>0$ and a real Banach space $X$ with norm $\|\cdot\|_X$, we define the spaces $L^p(0,T;X)$ consisting of all measurable functions $v: [0,T]\to X$ for which
\begin{equation}
\begin{aligned}
\notag
\|v\|_{L^p(0,T;X)}&:=\bigg(\int_0^T \! \|v(t)\|_{X}^p \, dt\bigg)^{1/p}<\infty, \qquad && \text{for } 1\le p< +\infty, \\
\|v\|_{L^{p}(0,T;X)}&:=\esssup_{0\le t\le T}\|v(t)\|_{X}<\infty,& & \text{for }  p = +\infty.
\end{aligned}
\end{equation} 
We also define {$H^1(0,T,X):=\{u\in L^2(0,T;X): u_t\in  L^2(0,T;X)\}$} and we denote by $C(0,T;X)$ the space of continuous functions $v:[0,T] \rightarrow X$ such that
\begin{equation}
\begin{gathered}
\notag
||v||_{C(0,T;X)} :=\max_{0 \leq t \leq T}{||v(t)||_X} < \infty\mbox{.}
\end{gathered}
\end{equation}
The model problem consists of finding $u:\Omega\times(0,T] \to\mathbb{R}$ such that
\begin{equation}\label{model_strong}
\begin{aligned}
\frac{\partial{u}}{\partial{t}} - \varepsilon\Delta{u}+ {\bf a} \cdot \nabla{u}+f(u) &= 0 \qquad && \text{in }  \Omega\times(0,T]  \mbox{,} \\ u &=0 \mbox{ } && \text{on }  \partial\Omega\times (0,T] \mbox{,} \\ u(\cdot,0) &=u_0 \mbox{ } && \text{in } \bar{\Omega}\mbox{,}
\end{aligned}
\end{equation}
for $f(u) = f_0 - u^2$ and where $u_0 \in H^1_0(\Omega)$, 
$\varepsilon>0$, ${\bf a} \in [C(0,T;W^{1, \infty}(\Omega))]^2$ and $f_0 \in C(0,T;L^2(\Omega))$. For simplicity of the presentation only, we shall also assume that $\nabla \cdot {\bf a} = 0$ although this is not an essential restriction to the analysis that follows.

The weak form of \eqref{model_strong} reads: find $u\in L^2(0,T;H^1_0(\Omega))\cap H^1(0,T;L^2(\Omega))$ such that for almost every $t \in (0,T]$ we have
\begin{equation}\label{blowup_model_weak}
\bigg( \frac{\partial{u}}{\partial{t}},v \bigg)+B(t;u,v)+(f(t;u),v) =0 \qquad \forall v \in H^1_0(\Omega),
\end{equation}
where
\begin{equation}
\notag
B(t;u,v) := \int_{\Omega} \! (\varepsilon \nabla{u} - {\bf a}u) \cdot \nabla{v} \, \ud x.
\end{equation}
Under the above assumptions and for any $t \in (0,T]$ the bilinear form $B$ is coercive in and $H^1_0(\Omega)$, viz.,  $B(t;v,v) \geq \varepsilon\ltwo{\nabla v}{}^2$, for all $v \in H^1_0(\Omega)$.

\section{Discretization}\label{dg_section}
Consider a shape-regular mesh $\zeta=\{K\}$ of $\Omega$ with $K$ denoting a generic element that is constructed via affine mappings $F_{K}:\hat{K}\to K$ with non-singular Jacobian where $\hat{K}$ is the reference triangle or the reference square. The mesh is allowed to contain a uniformly fixed number of regular hanging nodes per edge. On $\zeta$, we define the finite element space
\begin{equation}\label{eq:FEspace}
\mathbb{V}_h(\zeta) := \{v \in L^2(\Omega):v|_K\circ F_K \in \mathcal{P}^p(\hat{K}), \, K \in \zeta \},
\end{equation}
 with $\mathcal{P}^p(K)$ denoting the space of polynomials of total degree  $p$ or of degree $p$ in each variable if $\hat{K}$ is the reference triangle or the reference square, respectively. 
In what follows, we shall often make use of the orthogonal $L^2$-projection onto the finite element space $\mathbb{V}_h^{k}$, which we will denote by $\Pi^{k}$.

The set of all edges in the triangulation $\zeta$ is denoted by $\mathcal{E}(\zeta)$ while $\mathcal{E}^{int}(\zeta)\subset\mathcal{E}(\zeta)$ stands for the subset of all interior edges. Given $K \in \zeta$ and $E \in \mathcal{E}(\zeta)$, we set $h_K:=\diam(K)$ and $h_E:=\diam(E)$, respectively; we also denote the outward unit normal to the boundary $\partial{K}$ by ${\bf n}_K$. Given an edge $E\in \mathcal{E}^{int}(\zeta)$ shared by two elements $K$ and $K'$, a vector field ${\bf v}\in [H^{1/2}(\Omega)]^2$ and a scalar field $v\in H^{1/2}(\Omega)$, we define jumps and averages of ${\bf v}$ and $v$ across $E$ by
\begin{equation}
\begin{aligned}
\notag
 \{{\bf v}\} & := \frac{1}{2}({\bf v}|_{E\cap\bar{K}}+{\bf v}|_{E\cap\bar{K}'} ), \qquad &
   [{\bf v}]  :=& {\bf v}|_{E\cap\bar{K}} \cdot {\bf n}_K+ {\bf v}|_{E\cap\bar{K}'} \cdot {\bf n}_{K'},  \\
  \{v\} & := \frac{1}{2}( v|_{E\cap\bar{K}}+ v|_{E\cap\bar{K}'} ), \qquad &
   [v]   := &  v|_{E\cap\bar{K}}  {\bf n}_K+  v|_{E\cap\bar{K}'}  {\bf n}_{K'}.
 \end{aligned}
\end{equation}
If $E\subset \partial\Omega$, we set $\{{\bf v}\}:={\bf v}$, $[{\bf v}]:={\bf v} \cdot {\bf n}$, $\{v\}:= v$ and $[ v]:= v {\bf n}$ with ${\bf n}$ denoting the outward unit normal to the boundary $\partial\Omega$. The inflow and outflow parts of the boundary $\partial\Omega$ at time $t$, respectively, are defined by
\begin{equation}
\notag
 \partial\Omega^t_{in} := \{x \in \partial\Omega : {\bf a}(x,t) \cdot {\bf n}(x) < 0 \}\mbox{,} \quad \partial\Omega^t_{out} := \{x \in \partial\Omega : {\bf a}(x,t) \cdot {\bf n}(x) \geq 0 \}\mbox{.} 
\end{equation}
Similarly, the inflow and outflow parts of an element $K$ at time $t$ are defined by
\begin{equation}
\notag
\partial K^t_{in} := \{x \in \partial K : {\bf a}(x,t) \cdot {\bf n}_K(x) < 0 \}\mbox{,} \quad \partial K^t_{out}  := \{x \in \partial K : {\bf a}(x,t) \cdot {\bf n}_K(x) \geq 0 \}\mbox{.}
\end{equation}

We consider an implicit-explicit (IMEX) space-time discretization of \eqref{blowup_model_weak} consisting of implicit treatment for the linear convection-diffusion terms and explicit treatment for the nonlinear reaction term which was shown to be beneficial in Section \ref{ODE_sec}. For the spatial discretization, we use a standard (upwinded) interior penalty discontinuous Galerkin method, detailed below, to ensure stability of the spatial operator in convection-dominated regimes.

To this end, we consider a subdivision of $[0,T]$ into time intervals of lengths $\tau_1,\dots,\tau_N$ such that $\sum_{j=1}^N{\tau_j}=T$ for some integer $N \geq 1$ and we set $t^0 := 0$ and $t^k := \sum_{j=1}^{k}\tau_{j}$, $k=1,\dots,N$. Let $\zeta^0$ denote an initial spatial mesh of $\Omega$ associated with the time $t^0=0$. To each time $t^k$, $k=1,\dots,N$, we associate the spatial mesh $\zeta^k$ of $\Omega$ which is assumed to have been obtained from $\zeta^{k-1}$ by local refinement and/or coarsening. Each mesh $\zeta^{k}$ is assigned the finite element space $\mathbb{V}_h^k := \mathbb{V}_h(\zeta^k)$ given by~\eqref{eq:FEspace}. For brevity, let $ {\bf a}^k:={\bf a}(\cdot,t^k )$ and $f^k:=f(\cdot,t^k;U_h^k)$. Finally, for $t \in (t^{k-1},t^{k}]$, $\Gamma(t)$ will denote the union of all edges in the coarsest common refinement $\zeta^{k-1} \cup \zeta^{k}$ of $\zeta^{k-1}$ and $\zeta^{k}$.

The IMEX dG method then reads as follows. Let $U_h^0$ be a projection of $u_0$ onto $\mathbb{V}_h^0$. For $k=1,\dots, n$, find $U_h^{k} \in \mathbb{V}_h^{k}$ such that
\begin{equation}\label{dg_fd}
\bigg(\frac{U_h^{k}-U_h^{k-1}}{\tau_{k}},v_h^{k}\bigg )+B (t^{k};U_h^{k},v_h^{k} )+K_h (U_h^{k},v_h^{k} ) + (f^{k-1},v_h^{k} )=0,
\end{equation}
for all $v_h^k \in \mathbb{V}_h^k$ where
\begin{equation}
\begin{aligned}
\notag
B (t^k;U_h^k,v_h^k )  :=&  \sum_{K \in \zeta^k} \int_{K} \! (\varepsilon \nabla U_h^k  -{\bf{a}}U_h^k ) \cdot \nabla v_h^k \, \ud x+\sum_{E \in \mathcal{E}(\zeta^k)}\frac{ \gamma \varepsilon}{h_E}\int_{E} \!  [U_h^k ] \cdot  [v_h^k ] \, \ud s \\ &+ \sum_{K \in \zeta^k}\int_{\partial{K}^{t^k}_{out}} \! U_h^k[{\bf a} v_h^k] \, \ud s,  \\ 
K_h (U_h^k,v_h^k )  := & -\sum_{E \in \mathcal{E}(\zeta^k)}\int_{E} \! \{\varepsilon \nabla U_h^k \} \cdot [v_h^k ] + \{\varepsilon \nabla v_h^k \} \cdot [U_h^k ] \, \ud s.
\end{aligned}
 \end{equation}
We shall choose $U_h^0$ as the orthogonal $L^2$-projection of $u_0$ onto $\mathbb{V}_h^0$, that is $U_h^0 :=\Pi^{0}u_0$, although other projections onto $\mathbb{V}_h^0$ can also be used. In standard fashion, the penalty parameter, $\gamma$, is chosen large enough so that the operator $B+K_h$ is coercive on $\mathbb{V}_h^k$ (see, e.g., \cite{DE12}).

\section{An a posteriori bound}\label{apost_bound_sec}

In the context of the elliptic reconstruction framework \cite{LM06,MN03}, we require an a posteriori error bound for a related stationary problem. To that end, we consider a generalization of the error bound introduced in \cite{SZ09}; the proof of such bound is completely analogous and is, therefore, omitted for brevity.

\begin{theorem}\label{elliptic_apost}
Given $t \in (0,T]$ and $g \in L^2(\Omega)$, let $u^s \in H^1_0(\Omega)$ be the exact solution of the elliptic problem
\vspace{-0.2cm}
$$
B(t;u^s,v)=(g,v),\vspace{-0.2cm}
$$
for all $v\in H^1_0(\Omega)$
and let $u^s_h\in \mathbb{V}_h$ such that
\vspace{-0.2cm}
$$
B(t;u^s_h,v_h)+K_h(u^s_h,v_h)=(g,v_h) ,\vspace{-0.2cm}
$$
for all  $v_h\in \mathbb{V}_h$ be its dG approximation.
Then the following a posteriori error bound holds for any $0\ne v \in H_0^1(\Omega)$:
\begin{equation}\label{apost_stationary}
\begin{aligned}
\notag
\bigg(\frac{B(u^s-u_h^s,v)}{\sqrt{\varepsilon}\|\nabla v\|}\bigg)^{2}  \lesssim & \sum_{K\in \zeta}\frac{h^2_K}{\varepsilon}\|g+ \varepsilon \Delta u^s_h-{\bf a} \cdot \nabla u^s_h \|_{K}^2 +\sum_{E\in \mathcal{E}^{int}(\zeta)} \varepsilon h_E\| [\nabla u^s_h] \|_{E}^2 \\ & +\sum_{E\in \mathcal{E}(\zeta)}  \frac{\gamma\varepsilon}{h_E}\| [ u^s_h] \|_{E}^2+\frac{h_E}{\varepsilon}\| [{\bf a} u^s_h] \|_{E}^2.
\end{aligned}
\end{equation}
\end{theorem}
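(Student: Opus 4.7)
The plan is to follow the standard residual-based a posteriori analysis for interior-penalty dG discretizations of convection-diffusion equations: produce a Galerkin-type identity by testing the discrete equation against a Scott--Zhang conforming interpolant of $v$, integrate by parts element-wise, then apply Cauchy--Schwarz with carefully distributed $\varepsilon$- and $\gamma$-weights. First, choose a quasi-interpolant $v_h \in \mathbb{V}_h \cap H_0^1(\Omega)$ of $v$ into the conforming subspace of $\mathbb{V}_h$. Since $B(t;u^s,v)=(g,v)$ and $v_h$ is admissible in the discrete equation $B(t;u^s_h,v_h)+K_h(u^s_h,v_h)=(g,v_h)$, and since the penalty term, the upwind boundary term in $B$, and the $\{\varepsilon\nabla u^s_h\}\cdot[v_h]$ part of $K_h$ all vanish on the conforming $v_h$ (as $[v_h]=0$ and $v_h|_{\partial\Omega}=0$), one arrives at
\begin{equation*}
B(t;u^s-u^s_h,v) = (g,v-v_h) - \sum_{K\in\zeta}\int_K (\varepsilon\nabla u^s_h - \mathbf{a} u^s_h)\cdot \nabla(v-v_h)\,\ud x - \sum_{E}\int_E \{\varepsilon\nabla v_h\}\cdot [u^s_h]\,\ud s.
\end{equation*}

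Second, I would integrate the middle sum by parts element-wise: using $\nabla\cdot\mathbf{a}=0$ and the fact that $v-v_h\in H_0^1(\Omega)$ (so boundary-edge traces vanish), the volume contribution becomes the strong residual $R_K := g+\varepsilon\Delta u^s_h - \mathbf{a}\cdot\nabla u^s_h$ on each element, and the element-boundary sums collapse to interior-edge jumps $\varepsilon[\nabla u^s_h]$ and $[\mathbf{a} u^s_h]$. Each of the resulting terms is then bounded by edge- or element-wise Cauchy--Schwarz using the Scott--Zhang approximation and stability estimates
\begin{equation*}
\|v-v_h\|_K \lesssim h_K\|\nabla v\|_{\omega_K},\qquad \|v-v_h\|_E \lesssim h_E^{1/2}\|\nabla v\|_{\omega_E},\qquad \|\nabla v_h\|_{\omega_K} \lesssim \|\nabla v\|_{\omega_K},
\end{equation*}
together with a discrete trace-inverse estimate $\|\nabla v_h\|_E \lesssim h_E^{-1/2}\|\nabla v_h\|_{K_E}$ for the $\{\varepsilon\nabla v_h\}$ factor. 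The $\varepsilon$- and $\gamma$-weights are distributed so that the estimator side collects $h_K/\sqrt{\varepsilon}$ (element residual), $\sqrt{\varepsilon h_E}$ (gradient jumps), $\sqrt{h_E/\varepsilon}$ (convection jumps), and $\sqrt{\gamma\varepsilon/h_E}$ (solution jumps), while the test side telescopes, via finite overlap of the Scott--Zhang patches, to a constant multiple of $\sqrt{\varepsilon}\|\nabla v\|$. Dividing by $\sqrt{\varepsilon}\|\nabla v\|$ and squaring produces the four groups in the stated bound.

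The principal technical obstacle is the careful bookkeeping of these $\varepsilon$- and $\gamma$-weights so that they reassemble into exactly the four groups on the right-hand side without losing the correct power of $\gamma$ in the penalty term. A minor point is that the identity above yields $[\mathbf{a} u^s_h]$-jumps only on $\mathcal{E}^{int}(\zeta)$, whereas the stated estimator sums over $\mathcal{E}(\zeta)$; this is harmless since the extra boundary contribution is non-negative and merely enlarges the upper bound. The overall structure is the one used in \cite{SZ09}; the only adaptation needed is the tracking of the convection-related contributions coming from $\mathbf{a} u^s_h$ in $B$ and the upwind flux in the dG form.
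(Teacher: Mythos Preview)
Your proposal is correct and follows precisely the route the paper intends: the paper omits the proof entirely, stating only that it is ``completely analogous'' to the argument in \cite{SZ09}, which is exactly the Scott--Zhang quasi-interpolation, elementwise integration by parts, and weighted Cauchy--Schwarz strategy you outline. Your remarks on the vanishing of the penalty, upwind, and $\{\varepsilon\nabla u^s_h\}\cdot[v_h]$ terms when $v_h$ is conforming, and on the harmless over-summation of the $[\mathbf{a}u^s_h]$ jumps over boundary edges, are both accurate and constitute the only bookkeeping beyond \cite{SZ09} that the generalisation requires.
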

The symbols $\lesssim$ and $\gtrsim$ used above and throughout the rest of this section denote inequalities true up to a constant independent of the data $\varepsilon$, $\bf a$, $f$, the exact and numerical solutions $u$, $u_h$, and the local mesh-sizes and time step lengths.
\begin{definition}
We denote by $A^k \in \mathbb{V}_h^k$ the unique solution of the problem
\begin{equation}
\notag
B (t^{k};U_h^k,v_h^k )+K_h (U_h^k,v_h^k )= (A^k,v_h^k ) \qquad
\forall v_h^k \in \mathbb{V}_h^k.
\end{equation}
\end{definition}
For $k \geq 1$, we observe that $A^{k} = -\Pi^{k}f^{k-1} - (U_h^{k}-\Pi^{k}U_h^{k-1})/\tau_{k}$ from \eqref{dg_fd}. 

\begin{definition}\label{ellipticreconstr2} We define the elliptic reconstruction $w^{k} \in H^1_0(\Omega)$, $k=0,\dots, N$, to be the unique solution of the elliptic problem
$$B(t^{k};w^{k},v )=(A^k,v ) \qquad \forall v \in H^1_0(\Omega).$$
\end{definition}
Crucially, the dG discretization of the elliptic problem in Definition \ref{ellipticreconstr2} is equal to $U_h^k$ and so $B(t^k;w^k-U_h^k,v)$ can be estimated by Theorem \ref{elliptic_apost}.

At each time step $k$, we decompose the dG solution $U_h^k$ into a conforming part $U_{h,c}^k \in H^1_0(\Omega) \cap \mathbb{V}_h^k$ and a non-conforming part $U_{h,d}^k \in \mathbb{V}_h^k$ such that $U_h^k = U_{h,c}^k + U_{h,d}^k$. Further, for $t \in (t^{k-1},t^{k}]$, we define $U_h(t)$ to be the linear interpolant with respect to $t$ of the values $U_h^{k-1}$ and $U_h^{k}$, viz., 
\vspace{-0.2cm}
$$U_h(t):=\ell_{k-1}(t)U_h^{k-1}+\ell_{k}(t)U_h^{k},\vspace{-0.2cm}$$
where, as before, $\{\ell_{k-1}, \ell_{k}\}$ denotes the standard linear Lagrange interpolation basis defined on the interval $[t^{k-1},t^{k}]$. We define $U_{h,c}(t)$ and $U_{h,d}(t)$  analogously. We then decompose the error $e := u-U_h=e_c-U_{h,d}$ with $e_c:=u-U_{h,c}$ and we denote the elliptic error by $\theta^{k} := w^{k}-U_h^{k}$.

\begin{theorem} \label{ncbounds} Given $t \in [t^{k-1},t^{k}]$, there exists a decomposition of $U_h$, as described above, such that the following bounds hold for each element $K \in \zeta^{k-1} \cup \zeta^k$:
\begin{equation}
\begin{aligned}
\notag
||\nabla U_{h,d}||^2_{K} & \lesssim \sum_{E \subset \tilde{K}_E} h_E^{-1}||[U_h]||^2_{E}, \\
||U_{h,d}||^2_{K}  & \lesssim \sum_{E \subset \tilde{K}_E} h_E||[U_h]||^2_{E}, \\
||U_{h,d}||_{L^{\infty}(K)} & \lesssim ||[U_h]||_{L^{\infty}(\tilde{K}_E)},
\end{aligned}
\end{equation}
where $\tilde{K}_E := \big\{ \bigcup E : \bar{K} \cap \bar{E} \neq \emptyset, \mbox{ } E \in \mathcal{E}(\zeta^k \cup \zeta^{k+1}) \big\}$ denotes the edge patch of the element $K$ {\bf --} the union of all edges with a vertex on $\partial K$.
\end{theorem}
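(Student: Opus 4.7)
The plan is to construct the decomposition via nodal averaging on the coarsest common refinement of $\zeta^{k-1}$ and $\zeta^{k}$, in the spirit of the classical Karakashian--Pascal construction. First I would fix a Lagrange nodal basis of $\mathbb{V}_h^{k}$ on this common refinement; for each node $x_i$ lying in the interior of $\Omega$ and shared by several elements, I set $U_{h,c}^{k}(x_i)$ equal to a fixed convex combination (e.g.\ the arithmetic mean) of the one-sided traces $U_h^{k}|_{K'}(x_i)$ over the elements $K'$ containing $x_i$. For nodes on $\partial\Omega$ I assign the value zero, thereby ensuring $U_{h,c}^{k}\in H^1_0(\Omega)\cap\mathbb{V}_h^{k}$. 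The same recipe applied to $U_h^{k-1}$ produces $U_{h,c}^{k-1}$, and I set $U_{h,d}^{j}:=U_h^{j}-U_{h,c}^{j}$ for $j\in\{k-1,k\}$. The linear time interpolants satisfy $U_{h,c}(t)=\ell_{k-1}(t)U_{h,c}^{k-1}+\ell_{k}(t)U_{h,c}^{k}$ and an analogous identity for $U_{h,d}$, so bounds at $t^{k-1}$ and $t^{k}$ transfer to arbitrary $t\in[t^{k-1},t^{k}]$ since $\ell_{k-1},\ell_{k}\in[0,1]$.

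The crucial nodewise estimate is that for any Lagrange node $x_i\in\bar K$ and any other element $K'$ containing $x_i$,
\[
\bigl|U_h^{k}|_K(x_i)-U_h^{k}|_{K'}(x_i)\bigr|\;\le\;\sum_{E\in\mathcal{P}}\bigl|[U_h^{k}](x_i)\bigr|,
\]
where $\mathcal{P}$ is a path of edges through the edge patch $\tilde K_E$ connecting $K$ to $K'$; averaging then gives $|U_{h,d}^{k}(x_i)|\lesssim\max_{E\subset\tilde K_E}\|[U_h^{k}]\|_{L^\infty(E)}$, while for boundary nodes the inequality is immediate since $[U_h^{k}]=U_h^{k}\mathbf{n}$ on $\partial\Omega$. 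Because $U_{h,d}^{k}|_K$ is a polynomial of fixed degree, the equivalence of polynomial norms on the reference element upgrades this nodal estimate to the stated $\|U_{h,d}^{k}\|_{L^\infty(K)}$ bound.

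For the weighted $L^2$ and $H^1$ bounds I would combine the nodal estimate with standard scaling. On the reference element and then pulled back via the affine map $F_K$, we have $\|U_{h,d}^{k}\|_K^2\simeq h_K^2\sum_{x_i\in K}|U_{h,d}^{k}(x_i)|^2$ and $\|\nabla U_{h,d}^{k}\|_K^2\simeq\sum_{x_i\in K}|U_{h,d}^{k}(x_i)|^2$ in two dimensions. Using $|U_{h,d}^{k}(x_i)|^2\lesssim h_E^{-1}\|[U_h^{k}]\|_E^2$, obtained by applying $L^\infty$--$L^2$ norm equivalence on the polynomial $[U_h^{k}]|_E$ for an edge $E\subset\tilde K_E$ touching $x_i$, together with shape-regularity $h_E\simeq h_K$, yields the first two stated estimates. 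Summing over the finitely many nodes of $K$ introduces only a constant depending on the polynomial degree.

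The main obstacle is the careful bookkeeping at hanging nodes and across the refinement change between $\zeta^{k-1}$ and $\zeta^{k}$: one must ensure that the averaging is well defined at constrained nodes and that every path $\mathcal{P}$ used in the jump estimate stays inside $\tilde K_E$. The hypothesis of a uniformly bounded number of regular hanging nodes per edge, combined with shape-regularity, keeps the length of such paths and the cardinality of $\tilde K_E$ bounded independently of the mesh size, so all implicit constants depend only on the polynomial degree and the shape-regularity constant, as required.
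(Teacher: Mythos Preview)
Your proposal is correct and is precisely the Karakashian--Pascal averaging construction that the paper invokes: the paper does not give an independent proof but simply cites \cite{KP03} for the $L^2$ and $H^1$ bounds and \cite{DG01} for the $L^\infty$ bound, and your sketch reproduces the argument behind those references. The only remark is that the paper treats the result as a black box, so your careful handling of hanging nodes and the transition between $\zeta^{k-1}$ and $\zeta^{k}$ is more detail than the paper itself supplies, but it is exactly what is needed to justify the citation in the present setting.
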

\begin{proof}
See \cite{KP03} for the first two estimates and \cite{DG01} for the final estimate.
\end{proof}

\medskip

\begin{lemma}\label{blowup_error_eq_simple_fd}
Let $t \in (t^{k-1},t^{k} ]$ then for any $v \in H^1_0(\Omega)$ we have
\begin{equation*}\label{error_relation_fd}
\bigg(\frac{\partial{e}}{\partial{t}},v \bigg)+B(t;e,v)+(f(t;u)-f(t;U_h),v)=\bigg(-f(t;U_h)-\frac{\partial{U_h}}{\partial{t}},v \bigg)-B(t;U_h,v).
\end{equation*}
\end{lemma}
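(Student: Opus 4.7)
The plan is to start from the weak formulation~\eqref{blowup_model_weak}, which holds for all $v\in H^1_0(\Omega)$ and almost every $t\in(0,T]$, and manipulate it to isolate the error $e = u - U_h$. On each interval $(t^{k-1},t^k]$ the linear time interpolant $U_h(t)$ is well-defined, with constant-in-$t$ derivative $\partial_t U_h = (U_h^k - U_h^{k-1})/\tau_k$; in particular every term I am about to add or subtract is well-defined on this subinterval.

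The argument is then a one-step algebraic rearrangement. I subtract $(\partial_t U_h, v) + B(t; U_h, v) + (f(t; U_h), v)$ from both sides of~\eqref{blowup_model_weak}. On the left, linearity of $\partial_t$, bilinearity of $B(t;\cdot,\cdot)$ in its first argument, and the definition $e = u - U_h$ combine to give exactly $(\partial_t e, v) + B(t; e, v) + (f(t;u) - f(t;U_h), v)$. On the right I am left with $-(\partial_t U_h, v) - B(t; U_h, v) - (f(t;U_h), v)$, and merging the two $L^2$ inner products into $(-f(t;U_h) - \partial_t U_h, v)$ recovers precisely the claimed identity.

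The only point worth flagging is that $U_h(t)$ is discontinuous across element boundaries, so $B(t;U_h,v)$ must be interpreted element-wise as $\sum_{K}\int_K (\varepsilon\nabla U_h - {\bf a} U_h)\cdot\nabla v\, \mathrm{d}x$. Because $v\in H^1_0(\Omega)$ is conforming, however, $[v]$ vanishes on all interior edges and $v$ itself vanishes on $\partial\Omega$; consequently the penalty term and the upwind interface contribution that appear in the fully discrete form $B + K_h$ from~\eqref{dg_fd} do not enter the present identity, and the element-wise reading is unambiguous. I do not anticipate any substantive obstacle beyond this clarification: the lemma is essentially a book-keeping identity, whose role is to set up the residual splitting driving the subsequent a posteriori analysis via the elliptic reconstruction $w^k$ introduced in Definition~\ref{ellipticreconstr2}.
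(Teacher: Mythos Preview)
Your proof is correct and follows precisely the same route as the paper, which simply records that the identity ``follows from~\eqref{blowup_model_weak}'' by subtracting $(\partial_t U_h,v)+B(t;U_h,v)+(f(t;U_h),v)$ from both sides. Your added remark on the element-wise interpretation of $B(t;U_h,v)$ for discontinuous $U_h$ is a helpful clarification that the paper leaves implicit.
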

\begin{proof}
This follows from \eqref{blowup_model_weak}. 
\end{proof}

\medskip

From Lemma \ref{blowup_error_eq_simple_fd} we obtain
\begin{equation}
\begin{aligned}
&\bigg(\frac{\partial{e}}{\partial{t}},v \bigg)+B(t;e,v )+(f(t;u )-f(t;U_h),v)=-\bigg(A^{k}+f^{k-1}+\frac{\partial{U_h}}{\partial{t}},v \bigg)
\\
&+B(t^{k};\theta^{k},v )-B(t;U_h,v )+B(t^{k};U_h^{k},v )+(f^{k-1}-f(t;U_h),v ),
\end{aligned}
\end{equation}
which, upon straightforward manipulation, gives
\begin{equation}
\begin{aligned}
\label{equation2}
&\bigg(\frac{\partial{e}}{\partial{t}},v \bigg)+B(t;e,v)+(f(t;u )-f (t;U_h ),v)=-\bigg(A^{k}+f^{k-1}+\frac{\partial{U_h}}{\partial{t}},v \bigg)\\&+\ell_{k-1}B(t^{k-1};\theta^{k-1},v)+\ell_{k}B(t^{k};\theta^{k},v )-B(t;U_h,v )+\ell_{k-1}B(t^{k-1};U_h^{k-1},v)\\&+\ell_{k}B(t^{k};U_h^{k},v)+(f^{k-1}-f (t;U_h )+\ell_{k-1}(A^{k}-A^{k-1}),v).
\end{aligned}
\end{equation}
In what follows, it will be convenient to define the a posteriori error estimator through three constituent terms $\eta_I$, $\eta_A$, and $\eta_B$.
The first part of the estimator is the initial condition estimator, $\eta_I$, given by
\begin{equation}
\notag
\eta_I  := \bigg ( \|e(0)\|^2 +\sum_{E \in \mathcal{E}(\zeta^{0})} h_E \|[U_h^0]\|^2_{E} \bigg )^{1/2}.
\end{equation}
Both remaining parts, $\eta_A$ and $\eta_B$, are the sum of a number of terms related to either a space or time discretisation error, identified by a subscript $S$ or $T$, respectively.
In this way, for $t \in (t^{k-1},t^{k}]$, $\eta_A$ is given by
\begin{equation}
\begin{aligned}
\notag
\eta_A & :=  \ell_{k-1} \eta_{S_1,k-1}+ \ell_k \eta_{S_1,k} + \eta_{S_2,k} + \eta_{T_1,k},
\end{aligned}
\end{equation}
where
\begin{equation}
\begin{aligned}
\notag
\eta_{S_1,k}   := &  \bigg(\sum_{K \in \zeta^{k}} \frac{h_K^2}{\varepsilon}\|A^{k} + \varepsilon \Delta U_h^{k} - {\bf a}^{k} \cdot \nabla U_h^{k} \|^2_{K}+\sum_{E \in \mathcal{E}^{int}(\zeta^{k})} \varepsilon h_E \|[\nabla U_h^{k}]\|^2_{E} \\ & +\sum_{E \in \mathcal{E}(\zeta^{k})}  \frac{\gamma \varepsilon}{h_E} \|[U_h^{k}]\|^2_{E}+ \frac{h_E}{\varepsilon}\|[{\bf a}^{k} U_h^{k}]\|^2_{E}  \bigg)^{1/2}, \\
\eta_{S_2,k}  & :=  \bigg(\sum_{K \in \zeta^{k-1} \cup \zeta^{k}} \frac{h_K^2}{\varepsilon}\|f^{k-1}-\Pi^{k}f^{k-1}- (U_h^{k-1} - \Pi^{k}U_h^{k-1})/\tau_{k} \|^2_{K} \bigg)^{1/2}, \\ 
\eta_{T_1,k} & := {\varepsilon}^{-1/2}\|\ell_{k-1} \big({\bf a}^{k-1}-{\bf a} \big)U_h^{k-1}+\ell_{k}\big({\bf a}^{k}-{\bf a} \big)U_h^{k} \|,
\end{aligned}
\end{equation}
while $\eta_B$ is given by
\begin{equation}
\notag
\eta_B  := \eta_{S_3,k}+\eta_{S_4,k}+\eta_{T_2,k},
\end{equation}
where
\begin{equation}
\begin{aligned}
\notag
\eta_{S_3,k}  & := \bigg (\sum_{K \in \zeta^{k-1} \cup \zeta^{k}} \sum_{E \subset \tilde{K}_E} \sigma^2_K h_E \|[U_h]\|^2_{E} \bigg )^{1/2}, \\
\eta_{S_4,k}  & :=  \bigg ( \sum_{E \subset \Gamma(t)} h_E \| [(U_h^{k}-U_h^{k-1})/\tau_{k} ]\|^2_{E} \bigg)^{1/2}, \\
\eta_{T_2,k} & := \|f^{k-1}-f(t;U_h)+\ell_{k-1} (A^{k}-A^{k-1})\|,
\end{aligned}
\end{equation}
with
\begin{equation}
\begin{aligned}
\notag
\sigma_K := 2||U_h||_{L^{\infty}(K)}+||[U_h]||_{L^{\infty}(\tilde{K}_E)}.
\end{aligned}
\end{equation}
With the above notation at hand, we go back to \eqref{equation2} and bound the first term on the right-hand side using the definition of $A^k$, the orthogonality property of the $L^2$-projection and the Cauchy-Schwarz inequality:
\begin{equation}
\begin{aligned}
\bigg(A^{k}+f^{k-1}+\frac{\partial{U_h}}{\partial{t}},v \bigg)  &= \bigg(f^{k-1}-\Pi^kf^{k-1}-\frac{U_h^{k-1}-\Pi^kU_h^{k-1}}{\tau_k},v-\Pi^{k}v \bigg)\\
&\lesssim \eta_{S_2,k}\sqrt{\varepsilon}||\nabla v||.
\end{aligned}
\end{equation}
The next two terms give rise to parts of the space estimator via Theorem \ref{elliptic_apost}:
\begin{equation}
\ell_{k-1}B(t^{k-1};\theta^{k-1},v)+\ell_k B(t^k;\theta^k,v ) \lesssim  (\ell_{k-1} \eta_{S_1,k-1}+ \ell_k \eta_{S_1,k})\sqrt{\varepsilon}||\nabla v||.
\end{equation}
Using the definition of the bilinear form $B$ and the Cauchy-Schwarz inequality, the final four terms give rise to the time estimator:
\begin{equation}
\begin{aligned}
\ell_{k-1}B(t^{k-1};U_h^{k-1},v )+\ell_kB(t^k;U_h^k,v) -B(t;U_h,v) & \leq \eta_{T_1,k}\sqrt{\varepsilon}||\nabla v||, \\
(f^{k-1}-f(t;U_h)+\ell_{k-1}\big(A^{k}-A^{k-1} \big),v) & \leq \eta_{T_2,k}||v||.
\end{aligned}
\end{equation}
Setting $v=e_c$ in \eqref{equation2}, using the results above along with the coercivity of the bilinear form $B$ and the Cauchy-Schwarz inequality, we obtain
\begin{equation}
\begin{aligned}
& \frac{1}{2}\frac{d}{dt}\|e_c\|^2+\varepsilon\|\nabla e_c\|^2+(f(t;u )-f(t;U_h ),e_c )\lesssim   \bigg (\|\frac{\partial U_{h,d}}{\partial t}\|+\eta_{T_2,k} \bigg)\|e_c\| \\ & +  ( \ell_{k-1}\eta_{S_1,k-1} + \ell_k\eta_{S_1,k} + \eta_{S_2,k}+  \eta_{T_1,k} )\sqrt{\varepsilon}\|\nabla e_c\| +B(t;U_{h,d},e_c).
\end{aligned}
\end{equation}
Application of Theorem \ref{ncbounds} implies that
\begin{equation}
\begin{aligned}
& \frac{1}{2}\frac{d}{dt}\|e_c\|^2+\varepsilon\|\nabla e_c\|^2+(f(t;u )-f(t;U_h ),e_c )\lesssim  (\eta_{S_4,k}+\eta_{T_2,k})||e_c|| \\ &+  ( \ell_{k-1}\eta_{S_1,k-1} + \ell_k\eta_{S_1,k} + \eta_{S_2,k}+  \eta_{T_1,k} )\sqrt{\varepsilon}\|\nabla e_c\|.
\end{aligned}
\end{equation}
Thus, we conclude that
\begin{equation}
\begin{aligned}
\label{equation3}
&\frac{1}{2}\frac{d}{dt}\|e_c\|^2+\frac{\varepsilon}{2}\|\nabla e_c\|^2+(f(t;u)-f(t;U_h),e_c ) \lesssim  \frac{1}{2}\eta^2_A + \eta_B\|e_c\|.
\end{aligned}
\end{equation}
We must now  deal with the nonlinear term on the left-hand side of~\eqref{equation3}. We begin by noting that
\begin{equation}
\label{horriblenonlinear}
 (f(t;u)-f(t;U_h),e_c) = (f(t;e_c-U_{h,d}+U_h)-f(t;U_h),e_c) = T_1 + T_2 ,
\end{equation}
where
\begin{equation}
\begin{aligned}
\notag
 T_1 & :=  (2U_h U_{h,d},e_c ) - (U_{h,d}^2,e_c ) , \\
T_2 & :=  -(2U_he_c,e_c ) + (2e_c U_{h,d},e_c) -  (e_c^2,e_c ).
\end{aligned}
\end{equation}
Upon writing the contributions to $T_1$ elementwise and using Theorem \ref{ncbounds}, we have
\begin{equation}
\begin{aligned}
\label{Tbound1}
|T_1| & \leq \bigg (\sum_{K \in \zeta^{k-1} \cup \zeta^{k}}\!\!  \big(2\|U_h\|_{L^{\infty}(K)}+\|U_{h,d}\|_{L^{\infty}(K)}  \big)^2\|U_{h,d}\|^2_{K} \bigg )^{1/2}\|e_c\| \\
& \lesssim \eta_{S_3,k}\|e_c\|.
\end{aligned}
\end{equation}
To bound $T_2$, we use H{\"o}lder's inequality along with Theorem \ref{ncbounds} to conclude that
\begin{equation}
\label{Tbound2}
|T_2|  \lesssim \big (2\|U_h\|_{L^{\infty}(\Omega)} + \|[U_h]\|_{L^{\infty}(\Gamma(t))} \big)\|e_c\|^2+\|e_c\|^3_{L^3(\Omega)}.
\end{equation}
Combining \eqref{equation3}, \eqref{horriblenonlinear}, \eqref{Tbound1} and \eqref{Tbound2} we obtain 
\begin{equation}
\label{PDEerrorequation}
\frac{d}{dt}\|e_c\|^2+\varepsilon\|\nabla e_c\|^2  \leq  C\eta^2_A + 2C\eta_B \|e_c\| + 2\sigma_{\Omega}\|e_c\|^2 + 2\|e_c\|^3_{L^3(\Omega)},
\end{equation}
with $\sigma_{\Omega} := 2\|U_h\|_{L^{\infty}(\Omega)} +C\|[U_h]\|_{L^{\infty}(\Gamma(t))}$ where $C>0$ is a constant that is independent of $\varepsilon$, $\bf a$, $f$, $u$, $U_h$ and the local mesh-sizes and time step lengths. For $v\in H^1_0(\Omega)$, the Gagliardo-Nirenberg inequality $ \|v\|^3_{L^3(\Omega)}  \leq C_{\rm GN}\|v\|^2\|\nabla v\| $  implies that
\begin{equation}
\label{gagnir}
 \|e_c\|^3_{L^3(\Omega)}  \leq C_{\rm GN}\|e_c\|^2\|\nabla e_c\| \leq \frac{\varepsilon}{2}\|\nabla e_c\|^2 + \frac{C_{\rm GN}^2}{2\varepsilon}\|e_c\|^4.
\end{equation}
Thus,
\begin{equation}
\label{equation4}
\frac{d}{dt}\|e_c\|^2  \leq  C\eta^2_A + 2C\eta_B \|e_c\| + 2\sigma_{\Omega}\|e_c\|^2 +C_{\rm GN}^2 \varepsilon^{-1}\|e_c\|^4.
\end{equation}
To deal with the $L^2$-norms of $e_c$ appearing on the right-hand side, we use a variant of Gronwall's inequality. 
\begin{theorem}
\label{Gronwall}
Let $T>0$ and suppose that $c_0$ is a constant, $c_1,c_2\in L^1(0,T)$ are non-negative functions and that $u\in W^{1,1}(0,T)$ is a non-negative function satisfying
\begin{equation}
\notag
u^2(T) \leq c^2_0 + \int_0^T \! c_1(s)u(s) \, \ud s + \int_0^T \! c_2(s)u^2(s) \, \ud s,
\end{equation}
then
\begin{equation}
\notag
u(T) \leq \bigg (|c_0|+\frac{1}{2}\int_0^T \! c_1(s) \, \ud s  \bigg ) \exp \bigg (\frac{1}{2}\int_0^T \! c_2(s) \, \ud s \bigg ).
\end{equation}
\end{theorem}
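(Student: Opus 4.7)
The plan is to reduce the quadratic integral inequality to a linear differential inequality for a single auxiliary function and then invoke the standard Gronwall lemma. First I would interpret the hypothesis in the usual way, namely as holding for every $t\in[0,T]$ (with $T$ replaced by $t$), and introduce the regularized majorant
\[
\phi_\epsilon(t):=c_0^2+\epsilon+\int_0^t c_1(s)u(s)\,\ud s+\int_0^t c_2(s)u^2(s)\,\ud s,\qquad \epsilon>0.
\]
Since $u\in W^{1,1}(0,T)\hookrightarrow C([0,T])$, the integrands lie in $L^1(0,T)$, so $\phi_\epsilon$ is absolutely continuous; since $c_1,c_2,u\ge 0$ it is non-decreasing with $\phi_\epsilon\ge\epsilon>0$, and by hypothesis $u(t)\le\sqrt{\phi_\epsilon(t)}$ pointwise.

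Next I would differentiate and use this bound to obtain
\[
\phi_\epsilon'(t)=c_1(t)u(t)+c_2(t)u^2(t)\le c_1(t)\sqrt{\phi_\epsilon(t)}+c_2(t)\phi_\epsilon(t)\qquad\text{a.e.}
\]
Setting $\psi_\epsilon:=\sqrt{\phi_\epsilon}$ and applying the chain rule (legitimate since $\phi_\epsilon$ is AC and bounded away from zero), this yields the \emph{linear} differential inequality $\psi_\epsilon'\le\tfrac12 c_1+\tfrac12 c_2\psi_\epsilon$. A standard integrating-factor argument applied to this linear inequality, followed by using $c_2\ge 0$ to replace $\int_s^T c_2$ by $\int_0^T c_2$ inside the exponential, produces
\[
\psi_\epsilon(T)\le\Bigl(\psi_\epsilon(0)+\tfrac12\int_0^T c_1(s)\,\ud s\Bigr)\exp\!\Bigl(\tfrac12\int_0^T c_2(s)\,\ud s\Bigr).
\]
Since $u(T)\le\psi_\epsilon(T)$ and $\psi_\epsilon(0)=\sqrt{c_0^2+\epsilon}\to|c_0|$ as $\epsilon\to 0^+$, passing to the limit gives the advertised estimate.

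The main technical obstacle is the degeneracy when $c_0=0$ and $\phi_0:=\lim_{\epsilon\to 0}\phi_\epsilon$ may vanish at some interior point of $[0,T]$: there $\sqrt{\phi_0}$ is not differentiable in the classical sense and direct division by $\sqrt{\phi_0}$ is not licit. The regularization $\phi\mapsto\phi+\epsilon$ together with the subsequent $\epsilon\to 0^+$ limit circumvents this cleanly; an alternative would be a comparison argument against the solution of the ODE $y'=\tfrac12 c_1+\tfrac12 c_2 y$ with $y(0)=|c_0|$, but that requires a separate well-posedness discussion. The only other subtlety is confirming absolute continuity of $\phi_\epsilon$ so the chain rule applies, which follows from the Sobolev embedding $W^{1,1}(0,T)\hookrightarrow C([0,T])$ together with $c_1,c_2\in L^1(0,T)$.
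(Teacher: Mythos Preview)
Your argument is correct. The paper itself does not give a proof of this Gronwall variant at all; it simply cites Theorem~21 of \cite{D01}. Your self-contained route via the regularised majorant $\phi_\epsilon$ and the square-root substitution $\psi_\epsilon=\sqrt{\phi_\epsilon}$, followed by the linear Gronwall/integrating-factor step, is the standard way such Ou--Iang/Bihari-type estimates are established and is essentially what one finds in that reference. Your explicit remark that the hypothesis must be read as holding for every $t\in[0,T]$, not merely at the endpoint, is well taken: as literally stated the inequality at the single time $T$ would not suffice (e.g.\ $u\equiv 1$, $c_0=c_1=0$, $c_2\equiv 1$ on $[0,1]$ satisfies the endpoint inequality but violates the conclusion), and the paper's application---obtained by integrating a differential inequality from $t^{k-1}$ to $t$---indeed yields the inequality for all intermediate $t$.
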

\begin{proof}
See Theorem 21 in \cite{D01}.
\end{proof}

Application of Theorem \ref{Gronwall} to \eqref{equation4} for $t \in (t^{k-1},t^{k}]$ yields
\begin{equation}
\label{equation5}
\|e_c(t)\|  \leq \mathcal{H}_{k}(t)\mathcal{G}_{k}\Phi_{k},
\end{equation}
where
\begin{equation}
\begin{aligned}
\notag
\Phi_{k} &:= \bigg (\|e_c (t^{k-1} )\|^2 + C\int_{t^{k-1}}^{t^{k}} \! \eta_A^2 \, \ud s \bigg)^{1/2}+ C \int_{t^{k-1}}^{t^{k}} \! \eta_B \, \ud s, \\
\mathcal{G}_{k} & := \exp \bigg(\int_{t^{k-1}}^{t^{k}} \! \sigma_{\Omega} \, \ud s \bigg ), \\
\mathcal{H}_{k}(t) & := \exp \bigg ({C_{GN}^2}{\varepsilon}^{-1}\int_{t^{k-1}}^t \! \|e_c\|^2 \, \ud s  \bigg).
\end{aligned}
\end{equation}
To remove the non-computable term $\mathcal{H}_{k}$ from \eqref{equation5}, we use a continuation argument. We define the set
\begin{equation}
\notag
\mathcal{I}_{k}  :=  \{t \in [t^{k-1},t^{k} ]: \|e_c\|_{L^{\infty}(t^{k-1},t;L^2(\Omega))} \leq \delta_{k}\mathcal{G}_{k} \Phi_{k}  \},
\end{equation}
where, analagous to the ODE case, $\delta_{k} > 1$ should be chosen as small as possible. $\mathcal{I}_{k}$ is non-empty (since $t^{k-1} \in \mathcal{I}_{k}$) and bounded and, thus, attains some maximum value. Let $t^*=\max \mathcal{I}_k$ and assume that $t^* < t^{k}$. Then, from \eqref{equation5}, we have
\begin{equation}
\begin{aligned}
\label{equation6}
\|e_c\|_{L^{\infty}(t^{k-1},t^*;L^2(\Omega))} & \leq \mathcal{H}(t^*)\mathcal{G}_{k}\Phi_{k} \\ 
& \leq \exp\big (C_{\rm GN}^2 {\varepsilon}^{-1}\tau_{k}\|e_c\|_{L^{\infty}(t^{k-1},t^*;L^2(\Omega))}^2  \big)\mathcal{G}_{k}\Phi_{k} \\ 
& \leq \exp\big (C_{\rm GN}^2  {\varepsilon}^{-1}\tau_{k}\delta^2_{k}\mathcal{G}^2_{k}\Phi^2_{k} \big )\mathcal{G}_{k}\Phi_{k}.
\end{aligned}
\end{equation}
Now, suppose $\delta_k>1$ is chosen such that
\begin{equation}
\exp\big (C_{\rm GN}^2  {\varepsilon}^{-1}\tau_{k}\delta^2_{k}\mathcal{G}^2_{k}\Phi^2_{k} \big ) \le (1-\alpha) \delta_{k},
\end{equation}
for some $0<\alpha<1$ then \eqref{equation6} gives 
\begin{equation}
\|e_c\|_{L^{\infty}(t^{k-1},t^*;L^2(\Omega))}\le(1-\alpha) \delta_{k} \mathcal{G}_{k}\Phi_{k}<\delta_{k} \mathcal{G}_{k}\Phi_{k},
\end{equation}
which, in turn, implies that $t^*$ cannot be the maximal value of $t$ in $\mathcal{I}_{k}$ {\bf --} a contradiction. Hence $\mathcal{I}_{k} = [t^{k-1},t^{k}]$ and we have the desired error bound once $\delta_{k}$ is selected. Taking $\alpha\to 0$, we can select $\delta_{k}>1$ to be the smallest root of 
\begin{equation}
\label{cond2}
C_{\rm GN}^2  {\varepsilon}^{-1}\tau_{k}\delta^2_{k}\mathcal{G}^2_{k}\Phi^2_{k} - \log(\delta_{k}) = 0.
\end{equation}
Finally, we estimate $\Phi_1$. Application of Theorem \ref{ncbounds} and the triangle inequality yields

\begin{equation}
\|e_c(0)\|^2  \lesssim \|e(0)\|^2 + \|U_{h,d}(0)\|^2 \leq C\eta_I^2.
\end{equation}
Therefore, if we redefine $\Phi_1$ to be 
\begin{equation}
\notag
\Phi_{1} := \bigg (C\eta^2_I + C\int_{t^0}^{t^1} \! \eta_A^2 \, \ud s \bigg)^{1/2}+ C\int_{t^0}^{t^{1}} \! \eta_B \, \ud s,
\end{equation}
we have
\begin{equation}
\|e_c(t^1)\|  \leq \|e_c\|_{L^{\infty}(t^0,t^1;L^2(\Omega))} \leq \Psi_1,
\end{equation}
where $\Psi_1 := \delta_1\mathcal{G}_1\Phi_1$. In the same way, if we redefine 
\begin{equation}
\begin{aligned}
\notag
\Phi_{k} & := \bigg (\Psi^2_{k-1} + C\int_{t^{k-1}}^{t^{k}} \! \eta_A^2 \, \ud s \bigg)^{1/2}+ C \int_{t^{k-1}}^{t^{k}} \! \eta_B \, \ud s, \\
\Psi_{k} &:= \delta_{k}\mathcal{G}_{k}\Phi_{k},
\end{aligned}
\end{equation}
we have
\begin{equation}
\label{final_eq}
\|e_c(t^{k})\| \leq \|e_c\|_{L^{\infty}(t^{k-1},t^{k};L^2(\Omega))} \leq \Psi_{k}.
\end{equation}
Hence, we have shown the following result.
\begin{theorem}
\label{MainTheorem}
The error of the IMEX dG discretization of problem \eqref{blowup_model_weak}, given by~\eqref{dg_fd}, satisfies
\begin{equation}
\notag
\|e\|_{L^{\infty}(0,T;L^2(\Omega))} \lesssim \Psi_N + \esssup_{0 \leq t \leq T} 
\bigg( \sum_{E \subset \Gamma(t)} h_E \|[U_h]\|^2_{E} \bigg)^{1/2},
\end{equation}
providing that the solution to \eqref{cond2} exists for all time steps.
\end{theorem}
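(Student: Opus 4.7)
The plan is to assemble Theorem \ref{MainTheorem} from the building blocks already established above: the recursive per-interval bound \eqref{final_eq}, the error decomposition $e = e_c - U_{h,d}$, and the pointwise nonconforming estimate from Theorem \ref{ncbounds}.

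First I would argue by induction on $k$ that, provided \eqref{cond2} admits a solution $\delta_j>1$ for every $j=1,\dots,k$, the bound
$$\|e_c\|_{L^{\infty}(0,t^{k};L^2(\Omega))} \le \Psi_{k}$$
holds. The base case $k=1$ is exactly what was obtained after redefining $\Phi_1$ in terms of $\eta_I$. For the inductive step, note that in the recursive definition of $\Phi_{k}$ the only contribution from the history $[0,t^{k-1}]$ enters through $\Psi_{k-1}^2$, which dominates $\|e_c(t^{k-1})\|^2$ by the inductive hypothesis. Thus the local bound \eqref{final_eq} combines with $\|e_c\|_{L^\infty(0,t^{k-1};L^2)} \le \Psi_{k-1} \le \Psi_k$ (monotonicity of $\Psi_k$ is immediate from its definition, since the time-integrals of $\eta_A^2$ and $\eta_B$ are non-negative and $\mathcal{G}_k,\delta_k > 1$) to give the claim at level $k$. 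Taking $k=N$ yields
$$\|e_c\|_{L^{\infty}(0,T;L^2(\Omega))} \le \Psi_{N}.$$

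Next I would translate this bound on the conforming part into a bound on the full error via the triangle inequality $\|e\| \le \|e_c\| + \|U_{h,d}\|$, applied pointwise in $t$ and then passed through the essential supremum. The term $\|U_{h,d}(t)\|$ is controlled by the second estimate of Theorem \ref{ncbounds}, summed over $K \in \zeta^{k-1}\cup\zeta^k$ on the interval $(t^{k-1},t^k]$ containing $t$: each edge of the patch family lies in $\Gamma(t)$, so
$$\|U_{h,d}(t)\|^2 \lesssim \sum_{E \subset \Gamma(t)} h_E \|[U_h(t)]\|^2_{E}.$$
Taking the essential supremum in $t$ of this and of $\|e_c(t)\|$ separately, and using $\|e_c\|_{L^\infty(0,T;L^2)} \le \Psi_N$, gives the stated inequality.

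The main obstacle, and where care is required, is verifying that the inductive step of the first part is actually valid: the continuation argument leading to \eqref{final_eq} is local in time, so one must be sure that the choice of $\delta_k>1$ via \eqref{cond2} is compatible with the cumulative constant $\Psi_{k-1}^2$ folded into $\Phi_k$, i.e. that the conditional smallness assumption is precisely the existence of a root of \eqref{cond2} at every step. This is exactly the hypothesis in the theorem statement, so no further restriction is needed, but checking that $\Phi_k$ as redefined makes the continuation argument of \eqref{equation6} go through unchanged (with $\|e_c(t^{k-1})\|$ replaced by its upper bound $\Psi_{k-1}$) is the only subtle point; it follows because the Gronwall-type bound \eqref{equation5} is monotone in its initial datum $\|e_c(t^{k-1})\|$. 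All remaining steps are bookkeeping via the triangle inequality and a direct appeal to Theorem \ref{ncbounds}.
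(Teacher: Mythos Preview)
Your proposal is correct and follows exactly the route the paper takes: the one-line proof in the paper (``Follows from \eqref{final_eq}, the triangle inequality, and Theorem \ref{ncbounds}'') is precisely the combination of the recursive per-interval bound on $e_c$, the decomposition $e=e_c-U_{h,d}$, and the $L^2$ nonconforming estimate, and your inductive argument with the monotonicity $\Psi_{k-1}\le\Phi_k\le\Psi_k$ simply makes explicit what the paper leaves implicit in passing from the local bounds \eqref{final_eq} to the global $L^\infty(0,T;L^2)$ statement.
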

\begin{proof}
Follows from \eqref{final_eq}, the triangle inequality, and Theorem \ref{ncbounds}.
\end{proof}

\smallskip

The estimator produced above is suboptimal with respect to the mesh-size as it is only spatially optimal in the $L^2(H^1)$-norm. It is possible to conduct a continuation argument for the $L^2(H^1)$-norm rather than the $L^{\infty}(L^2)$-norm if one desires a spatially optimal error estimator; this is stated for completeness in the theorem below. However, the resulting $\delta$ equation was observed to be more restrictive with regards to how quickly the blow-up time is approached. For this reason, we opt to use the a posteriori error estimator of Theorem~\ref{MainTheorem} in the adaptive algorithm introduced in the next section.

\smallskip

\begin{theorem}
The error of the IMEX dG discretization of problem \eqref{blowup_model_weak}, given by~\eqref{dg_fd}, satisfies
\begin{equation}
\notag
\bigg( \|e(T)\|^2 + \int_0^T \varepsilon||\nabla e_c||^2 \, \ud t \bigg)^{1/2} \lesssim \sum_{k=1}^N \Psi_k + \esssup_{0 \leq t \leq T} 
\bigg( \sum_{E \subset \Gamma(t)} h_E \|[U_h]\|^2_{E} \bigg)^{1/2}.
\end{equation}
Furthermore, close to the blow-up time where $\|e(T)\| = \|e\|_{L^{\infty}(0,T;L^2(\Omega))}$ we have
\begin{equation}
\notag
\bigg(  \|e\|^2_{L^{\infty}(0,T;L^2(\Omega))} + \int_0^T \varepsilon||\nabla e_c||^2 \, \ud t \bigg)^{1/2} \lesssim \sum_{k=1}^N \Psi_k + \esssup_{0 \leq t \leq T} 
\bigg( \sum_{E \subset \Gamma(t)} h_E \|[U_h]\|^2_{E} \bigg)^{1/2},
\end{equation}
where $\Psi_k$, $k=1,\dots,N$, is defined recursively with $\Psi_0 = C\eta_I$ and
\begin{equation}
\begin{aligned}
\notag
\Phi_{k} & := \bigg (\Psi^2_{k-1} + C\int_{t^{k-1}}^{t^{k}} \! \eta_A^2 \, \ud s + C \int_{t^{k-1}}^{t^{k}} \! \eta^2_B \, \ud s \bigg)^{1/2}, \\
{G}_{k} & := \exp(\tau_k / 2) \exp \bigg(\int_{t^{k-1}}^{t^{k}} \! \sigma_{\Omega} \, \ud s \bigg ), \\
\Psi_{k} &:= \delta_{k}\mathcal{G}_{k}\Phi_{k},
\end{aligned}
\end{equation}
provided that $\delta_k > 1$ which is the smallest root of the equation
\begin{equation}
\begin{aligned}
\notag
C_{GN}\varepsilon^{-1/2}\tau^{1/2}_{k}\delta_{k}G_{k}\Phi_{k} - \log(\delta_{k})=0,
\end{aligned}
\end{equation}
exists for all time steps.
\end{theorem}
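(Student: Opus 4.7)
The plan is to follow the proof of Theorem~\ref{MainTheorem} but to retain the $\int \varepsilon \|\nabla e_c\|^2 \, ds$ contribution on the left-hand side of the integrated energy inequality rather than absorbing the Gagliardo--Nirenberg term into it via Young's inequality, as was done in \eqref{gagnir}. Starting from \eqref{PDEerrorequation}, I would apply Young's inequality to the linear cross-term $2C\eta_B \|e_c\| \leq C^2 \eta_B^2 + \|e_c\|^2$; the term $C^2 \int \eta_B^2$ then enters the data (explaining the appearance of $\int \eta_B^2$ in the redefined $\Phi_k$), while the residual $\|e_c\|^2$ contributes a constant $1$ to the effective Gronwall coefficient. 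Integrating in time on $(t^{k-1}, t]$ and introducing the local energy
\begin{equation*}
u_k(t) := \bigl( \|e_c(t)\|^2 + \textstyle\int_{t^{k-1}}^{t} \varepsilon \|\nabla e_c\|^2 \, ds \bigr)^{1/2},
\end{equation*}
the per-interval goal is to prove $u_k(t^k) \lesssim \Psi_k$; the factor $G_k = \exp(\tau_k/2)\exp(\int \sigma_\Omega)$ then emerges from Theorem~\ref{Gronwall} with $c_2 = 1 + 2\sigma_\Omega$ via the $\exp(\tfrac{1}{2}\int c_2)$ factor, the additional $\exp(\tau_k / 2)$ over the $\mathcal{G}_k$ of Theorem~\ref{MainTheorem} being exactly due to the $\|e_c\|^2$ residual contributed by the above Young step.

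The nonlinear Gagliardo--Nirenberg contribution $2 \int \|e_c\|_{L^3}^3 \leq 2 C_{\rm GN} \int \|e_c\|^2 \|\nabla e_c\|$ would be controlled by a local-in-time continuation argument mirroring those in Theorems~\ref{apostest1} and~\ref{MainTheorem}: define
\begin{equation*}
\mathcal{I}_k := \{ t \in [t^{k-1}, t^k] :  u_k(s) \leq \delta_k G_k \Phi_k \ \text{ for all } s \in [t^{k-1}, t]\},
\end{equation*}
on which both $\|e_c\| \leq \delta_k G_k \Phi_k$ and $\bigl(\int \varepsilon \|\nabla e_c\|^2\bigr)^{1/2} \leq \delta_k G_k \Phi_k$ are available. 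A careful combination of Cauchy--Schwarz in time with these continuation bounds allows the Gagliardo--Nirenberg term to enter Theorem~\ref{Gronwall} as a contribution to $c_2 u^2$ whose effect on the Gronwall exponent $\tfrac{1}{2}\int c_2$ is precisely $C_{\rm GN}\varepsilon^{-1/2}\tau_k^{1/2}\delta_k G_k \Phi_k$. The standard maximality contradiction then forces $\mathcal{I}_k = [t^{k-1}, t^k]$ and identifies $\delta_k$ as the smallest root of $C_{\rm GN}\varepsilon^{-1/2}\tau_k^{1/2}\delta_k G_k \Phi_k - \log\delta_k = 0$, yielding the per-interval bound.

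To pass from per-interval bounds to the global statement, I would observe that the recursion $\Phi_k^2 = \Psi_{k-1}^2 + C\int\eta_A^2 + C\int\eta_B^2$ already accumulates prior contributions, whence
\begin{equation*}
\|e_c(T)\|^2 + \int_0^T \varepsilon\|\nabla e_c\|^2 \, dt \leq \Psi_N^2 + \sum_{k=1}^N \int_{t^{k-1}}^{t^k} \varepsilon\|\nabla e_c\|^2 \, ds \lesssim \sum_{k=1}^N \Psi_k^2,
\end{equation*}
take square roots, and apply $(\sum_k a_k^2)^{1/2} \leq \sum_k a_k$ to obtain the stated $\sum_k \Psi_k$ bound; the non-conforming essential supremum contribution arises from $\|e\| \leq \|e_c\| + \|U_{h,d}\|$ and Theorem~\ref{ncbounds}, exactly as in Theorem~\ref{MainTheorem}, while the second, sharper statement is immediate because under its hypothesis the left-hand side of the first statement coincides with the full $L^\infty(L^2) + L^2(H^1)$-norm. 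The main technical obstacle will be in the splitting of the Gagliardo--Nirenberg term so that, after the continuation substitution, its contribution to $\tfrac{1}{2}\int c_2$ is \emph{linear} (rather than quadratic, as in Theorem~\ref{MainTheorem}) in $\delta_k G_k \Phi_k$. This linearisation is precisely what produces the more restrictive $\delta_k$-equation flagged in the remark preceding the theorem and thereby explains the authors' preference for the $L^\infty(L^2)$-based estimator in the adaptive algorithm.
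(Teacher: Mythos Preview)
Your proposal is correct and takes precisely the approach the paper indicates: the paper's proof consists of a single sentence stating that the argument ``is completely analogous to that of Theorem~\ref{MainTheorem} and follows from \eqref{PDEerrorequation} by conducting a continuation argument for the $L^{\infty}(L^2)+L^2(H^1)$-norm'', and you have supplied exactly those omitted details. In particular, you correctly identify the three structural changes relative to Theorem~\ref{MainTheorem}: the Young splitting of $2C\eta_B\|e_c\|$ (producing $\int\eta_B^2$ in $\Phi_k$ and the extra factor $\exp(\tau_k/2)$ in $G_k$), the retention of $\int\varepsilon\|\nabla e_c\|^2$ in the continuation quantity $u_k$, and the bounding of the Gagliardo--Nirenberg term via $2C_{\rm GN}\|e_c\|^2\|\nabla e_c\|\le \bigl(2C_{\rm GN}\varepsilon^{-1/2}\sqrt{\varepsilon}\|\nabla e_c\|\bigr)u_k^2$ followed by Cauchy--Schwarz in time so that its contribution to $\tfrac12\int c_2$ is $C_{\rm GN}\varepsilon^{-1/2}\tau_k^{1/2}\delta_kG_k\Phi_k$, yielding the stated linear $\delta_k$-equation.
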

\begin{proof}
The proof is completely analogous to that of Theorem \ref{MainTheorem} and follows from \eqref{PDEerrorequation} by conducting a continuation argument for the $L^{\infty}(L^2)+L^2(H^1)$-norm.
\end{proof}

\section{An adaptive algorithm}\label{adaptive_sec}
We shall now proceed by stating our space-time adaptive algorithm for problems with finite time blow-up.

The algorithm is based on Algorithm 2 from Section 2 and the space-time adaptive algorithm for linear evolution problems presented in \cite{CGM14}. It makes use of different terms in the a posteriori bound in Theorem \ref{MainTheorem} to take automatic decisions on space-time refinement and coarsening. The pseudocode describing the adaptive algorithm is given in Algorithm 3.

\begin{algorithm} \label{adaptive_algorithm}
  \begin{algorithmic}[1]
     \State {\bf Input:} $\varepsilon$, ${\bf a}$, $f_0$, $u_0$, $\Omega$,  $\tau_1$, $\zeta^0$, $\gamma$, ${\tt ttol^+}$, ${\tt ttol^-}$, ${\tt stol^+}$, ${\tt stol^-}$.
     \State Compute $U_h^0$.
 \State Compute $U_h^1$ from $U_h^0$.
    \While {$\displaystyle \int_{t^0}^{t^1} \! \eta^2_{T_2,1} \, ds > {\tt ttol^+} \text{ OR } \max_K \eta^2_{S_1,1} |_K > {\tt stol^+} $}
    \State Modify $\zeta^0$ by refining all elements such that $\eta^2_{S_1,1} |_K > {\tt stol^+}$ and coarsening all elements such that $\eta^2_{S_1,1} |_K < {\tt stol^-}$.
\If   {$\displaystyle \int_{t^0}^{t^1} \! \eta^2_{T_2,1} \, ds > {\tt ttol^+}$}
   
 \State $\tau_{1} \leftarrow \tau_{1}/2$.

    \EndIf

     \State Compute $U_h^0$.
 \State Compute $U_h^1$ from $U_h^0$.
    \EndWhile

\State Compute $\delta_1$.

 \State Multiply ${\tt ttol^+}$, ${\tt ttol^-}$, ${\tt stol^+}$, ${\tt stol^-}$ by the factor $G_{1}$.

     \State  Set $j = 0$,  $\zeta^1 = \zeta^0$.

\While {$\delta_{j+1}$ exists}
\State $ j \leftarrow j+1$. 
\State $\tau_{j+1} = \tau_j$. 

    \State Compute $U_h^{j+1}$ from $U_h^j$.
    \If {  $\displaystyle\int_{t^j}^{t^{j+1}} \! \eta^2_{T_2,j+1} \, ds> {\tt ttol^+}$ }
     
 \State $\tau_{j+1} \leftarrow \tau_{j+1}/2$.
\State Compute $U_h^{j+1}$ from $U_h^j$.
    \EndIf
    \If {  $\displaystyle\int_{t^j}^{t^{j+1}} \! \eta^2_{T_2,j+1} \, ds < {\tt ttol^-}$ }
     
 \State $\tau_{j+1} \leftarrow 2\tau_{j+1}$.
\State Compute $U_h^{j+1}$ from $U_h^j$.
    \EndIf
    \State Form $\zeta^{j+1}$ from $\zeta^j$ by refining all elements such that $\eta^2_{S_1,j+1} |_K > {\tt stol^+}$ and coarsening all elements such that $\eta^2_{S_1,j+1} |_K < {\tt stol^-}$.
    \State Compute $U_h^{j+1}$ from $U_h^j$.
    \State Compute $\delta_{j+1}$.
 \State Multiply ${\tt ttol^+},{\tt ttol^-},{\tt stol^+},{\tt stol^-}$ by the factor $G_{j+1}$.
\EndWhile

\State {\bf Output:} $j$, $t^j$, $||U_h (t^j )||_{L^{\infty}(\Omega)}$.
  \end{algorithmic}
  \caption{Space-time adaptivity}
\end{algorithm}

The term $\eta_{S_1,k}$ drives both local mesh refinement and coarsening. The elements are refined, coarsened or left unchanged depending on two spatial thresholds ${\tt stol^+}$ and ${\tt stol^-}$. Similarly, the term $\eta_{T_2,k}$ is used to drive temporal refinement and coarsening subject to two temporal thresholds ${\tt ttol^+}$ and ${\tt ttol^-}$.

\section{Numerical Experiments}\label{numer_exp_sec}

We shall investigate numerically the a posteriori bound presented in Theorem \ref{MainTheorem} and the performance of the adaptive algorithm through an implementation based on the {\tt deal.II} finite element library \cite{BHK07}. All the numerical experiments have been performed using the  high performance computing facility ALICE at the University of Leicester. The following settings are common to all the numerical experiments presented. We use polynomials of degree five, hence $p=5$ throughout. This particular choice provides a good compromise between run time and spatial discretization error for the problems considered. Furthermore, it permits us to analyse the asymptotic temporal behaviour of the adaptive algorithm. 

Correspondingly, we set $\gamma = 30$ to ensure coercivity of the discrete bilinear form. We also set ${\tt ttol}^- = 0.01*{\tt ttol}^+$ and ${\tt stol}^- = 10^{-6}*{\tt stol}^+$ as, respectively, the temporal and spatial coarsening parameters. The initial mesh $\zeta^0$ is chosen to be a $4 \times 4$ uniform quadrilateral mesh and the initial time step length $\tau_1$ is chosen so that the first computed numerical approximation is before the expected blow-up time. The unknown constants in the a posteriori bound are set equal to one as is the constant $C_{GN}$ in  \eqref{cond2}; the above conventions  are deemed reasonable for the practical implementation of the a posteriori bound.

\subsection{Example 1}\label{num_ex_one}
We begin by considering a standard reaction-diffusion semilinear PDE problem whose blow-up behaviour is theoretically well understood. This is given by setting $\Omega = (-4,4)^2$, $\varepsilon = 1$, ${\bf a} = (0,0)^T$, $f_0 = 0$ and $u_0 = 10 \mathrm{e}^{-2(x^2+y^2)}$. The initial condition $u_0$ is chosen to be a Gaussian blob centred on the origin that is chosen large enough so that the solution exhibits blow-up; the blow-up set consists of a single point corresponding to the centre of the Gaussian.

To assess the asymptotic behaviour of the error estimator, we fix a very small spatial threshold so as to render the spatial contribution to both the error and the estimator negligible. We then vary the temporal threshold and record how far the algorithm is able to advance towards the blow-up time. The results are given in Table \ref{blowupdata1}.

\begin{table}[ht]
\caption{Example 1 Results} 
\centering 
\begin{tabular}{c c c c c} 
\hline\hline 
${\tt ttol^+}$ & Time Steps & Estimator & Final Time & $||U_h(T)||_{L^{\infty}(\Omega)}$ \\ 
\hline 
1 & 3 & 9.5 & 0.09375 & 12.244 \\ 
0.125 & 8 & 24.6 & 0.12500 & 14.742 \\
$(0.125)^2$ & 19 & 54.0 & 0.14844 & 18.556 \\
$(0.125)^3$ & 42 & 66.7 & 0.16406 & 23.468 \\
$(0.125)^4$ & 92 & 218.5 & 0.17969 & 32.108 \\
$(0.125)^5$ & 195 & 1142.4 & 0.19043 & 44.217 \\
$(0.125)^6$ & 405 & 1506.0 & 0.19775 & 60.493 \\
$(0.125)^7$ & 832 & 1754.1 & 0.20313 & 83.315 \\
$(0.125)^8$ & 1698 & 5554.2 & 0.20728 & 117.780 \\
$(0.125)^9$ & 3443 & 6020.4 & 0.21014 & 165.833 \\
$(0.125)^{10}$ & 6956 & 33426.7 & 0.21228 & 238.705 \\
$(0.125)^{11}$ & 14008 & 36375.0 & 0.21375 & 343.078 \\
$(0.125)^{12}$ & 28151 & 66012.8 & 0.21478 & 496.885 \\
$(0.125)^{13}$ & 56489 & 157300.0 & 0.21549 & 722.884 \\
\hline 
\end{tabular}
\label{blowupdata1}
\end{table}

For the present case (problems without convection), it is known that the solution to \eqref{blowup_model_weak} has the same asymptotic behaviour as the solution to \eqref{ODE} with respect to the time variable~\cite{Hu}. Thus, we would expect an effective estimator to yield similar rates for $\lambda$, the difference between the true and numerical blow-up time, to those seen in Section \ref{ODE_sec}. Although the true blow-up time for this problem is not known, we observe from Table \ref{blowupdata1} that 
\begin{equation}
\notag
\|U_h\|_{L^{\infty}(0,T;L^{\infty}(\Omega))} \propto N^{1/2}.
\end{equation}
From \cite{Hu}, we know the relationship between the magnitude of the exact solution in the $L^{\infty}(L^{\infty})$-norm and the distance from the blow-up time. Thus, under the assumption that the numerical solution is scaling like the exact solution, we have
\begin{equation}
\notag
\lambda({\tt ttol^+},N) \approx \|u\|^{-1}_{L^{\infty}(0,T;L^{\infty}(\Omega))} \approx \|U_h\|^{-1}_{L^{\infty}(0,T;L^{\infty}(\Omega))}.
\end{equation}
Therefore, we conjecture that
\begin{equation}
\begin{aligned}
\notag
\lambda({\tt ttol^+},N) \propto N^{-1/2}.
\end{aligned}
\end{equation}
Note that the conjectured convergence rate is slower than the comparable results in Section \ref{ODE_sec}; a possible explanation for this will be given in the concluding remarks.

Next we investigate the \emph{numerical blow-up rate} of $||U_h(t)||_{L^{\infty}(\Omega)}$. In particular, we are interested in checking if the numerical blow-up rate coincides with the theoretical one. For this particular example, it is well known, cf. \cite{MZ98,MerleZaag}, that close to the blow-up time $||u(t)||_{L^{\infty}(\Omega)}$ behaves as
$$\|u(t)\|_{L^\infty(\Omega)}\sim\frac1{T^*-t},$$
where $T^*$ denotes the blow-up time. Let us denote the \emph{numerical blow-up time} by $t^*$ which we compute as follows. For the last numerical experiment (with ${\tt ttol^+}= (0.125)^{13}$), we assume that there exists a constant $C_N$ such that
$$\|U_h(t)\|_{L^\infty(\Omega)}=C_N\frac1{t^*-t},\quad t=t^{N-1}, T.$$
Then $t^*$ is computed by
$$t^*=\frac{T\|U_h(T)\|_{L^\infty(\Omega)}-t^{N-1}\|U_h(t^{N-1})\|_{L^\infty(\Omega)}}{\|U_h(T)\|_{L^\infty(\Omega)}-\|U_h(t^{N-1})\|_{L^\infty(\Omega)}}.$$ 
For this example, the above relation gives $t^*=0.217055.$ 

\begin{figure}[h!]
\centering
\includegraphics[scale=0.45]{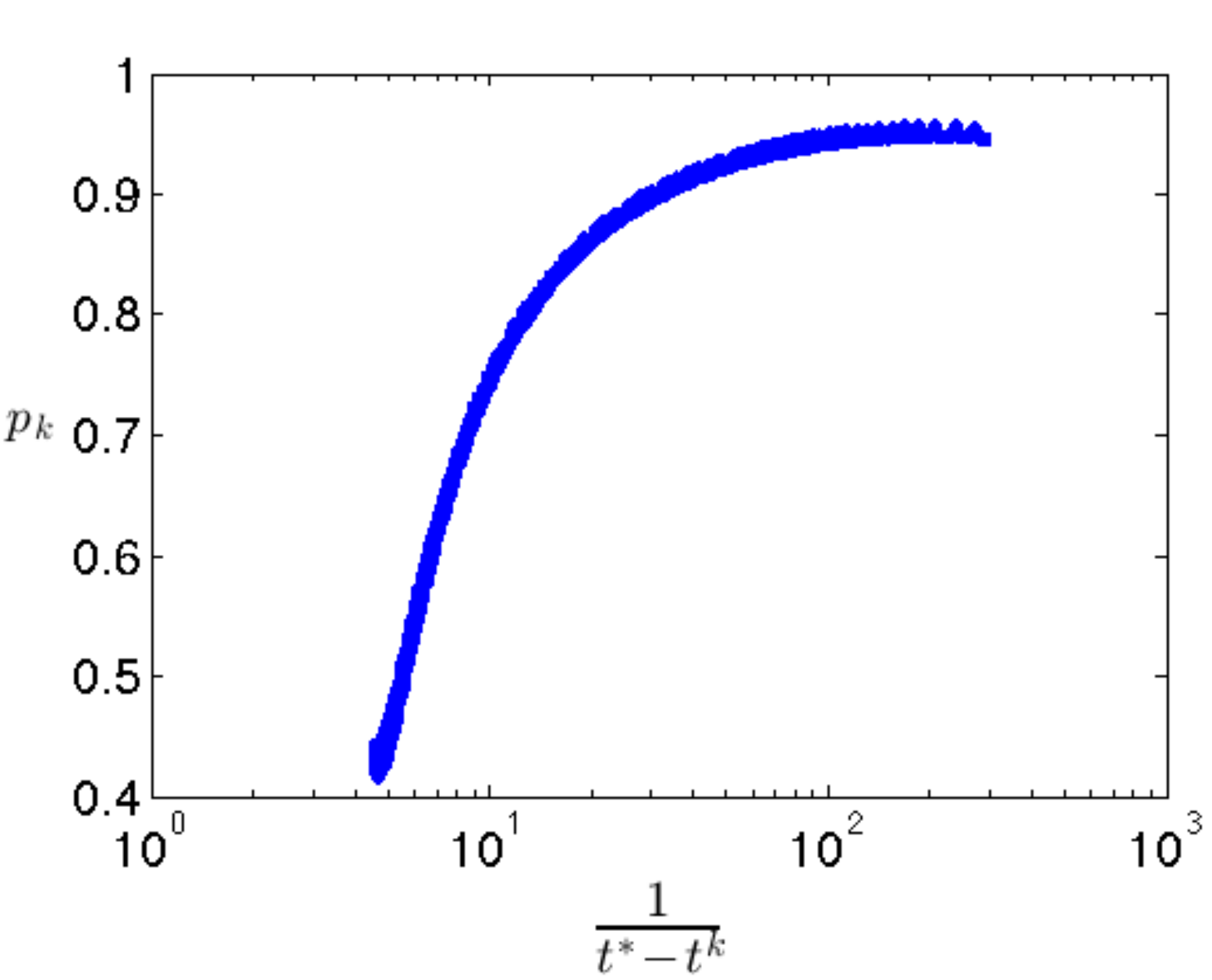}
\caption{Example 1: Numerical blow-up rate.}
\label{numericalblowuprate}
\end{figure}

To compute the numerical blow-up rate, we consider all the time nodes $t^k,\, k=0,\ldots,N$ with $t^N=0.21375$ (corresponding to the numerical experiment with ${\tt ttol^+}= (0.125)^{11}$). Then for every two consecutive times $t^{k-1},t^k$ we assume that there exists a constant $C_k$ such that
$$\|U_h(t)\|_{L^\infty(\Omega)}=C_k\frac1{(t^*-t)^{p_k}},\quad t=t^{k-1}, t^k,$$
and hence we compute $p_k$ by
$$p_k=\frac{\log(\|U_h(t^k)\|_{L^\infty(\Omega)}/\|U_h(t_{k-1})\|_{L^\infty(\Omega)})}{\log\left((t^*-t^{k-1})/(t^*-t^k)\right)}.$$
This produces a sequence $\{p_k\}_{k=1}^{N}$ of numerical blow-up rates. Since for this example the theoretical blow-up rate is one, for a ``correct'' asymptotic blow-up rate of the numerical approximation we expect $p_k$ to tend to a number close to one as $k\to N$. This is indeed the case, as observed in Figure~\ref{numericalblowuprate}.

\subsection{Example 2}
Let $\Omega = (-4,4)^2$, $\varepsilon = 1$, ${\bf a} = (1,1)^T$, $f_0 = -1$ and $u_0 = 0$. This numerical example is interesting to study as not much is known about blow-up problems with non-symmetric spatial operators. The solution behaves as the solution to a linear convection-diffusion problem for small $t$. As time progresses, the nonlinear term takes over and the solution begins to exhibit point growth leading to blow-up. As in Example 1, we choose to use a small spatial threshold to render the spatial contribution to both the error and the estimator negligible. We then reduce the temporal threshold and observe how far we can advance towards the blow-up time. The results are given in Table \ref{blowupdata2}.
\begin{table}[h!]
\caption{Example 2 Results} 
\centering 
\begin{tabular}{c c c c c} 
\hline\hline 
${\tt ttol^+}$ & Time Steps & Estimator & Final Time & $||U_h(T)||_{L^{\infty}(\Omega)}$ \\ 
\hline 
1 & 4 & 3.6 & 0.78125 & 0.886 \\ 
0.125 & 10 & 3.6 & 0.97656 & 1.322 \\
$(0.125)^2$ & 54 & 22.0 & 1.31836 & 3.269 \\
$(0.125)^3$ & 119 & 47.5 & 1.41602 & 5.107 \\
$(0.125)^4$ & 252 & 132.1 & 1.48163 & 8.059 \\
$(0.125)^5$ & 520 & 218.4 & 1.51711 & 11.819 \\
$(0.125)^6$ & 1064 & 664.6 & 1.54467 & 18.139 \\
$(0.125)^7$ & 2158 & 1466.1 & 1.56224 & 27.405 \\
$(0.125)^8$ & 4354 & 1421.7 & 1.57402 & 41.374 \\
$(0.125)^9$ & 8792 & 11423.0 & 1.58243 & 64.450 \\
$(0.125)^{10}$ & 17713 & 21497.8 & 1.58770 & 99.190 \\
$(0.125)^{11}$ & 35580 & 21097.1 & 1.59092 & 145.785 \\
$(0.125)^{12}$ & 71352 & 35862.0 & 1.59299 & 211.278 \\
\hline 
\end{tabular}
\label{blowupdata2}
\end{table}
From the data, we conclude that
\begin{equation}
\notag
\|U_h\|_{L^{\infty}(0,T;L^{\infty}(\Omega))} \propto N^{1/2}.
\end{equation}
Although not much is known about blow-up problems with convection, it is reasonable to assume that because the nonlinear term dominates close to the blow-up time that an analogous relationship between the magnitude of the exact solution in the $L^{\infty}(L^{\infty})$-norm and distance from the blow-up time exists as in Example 1. Assuming that this is indeed the case and following the same reasoning as in Example 1, we again conclude that
$$
\lambda({\tt ttol^+},N) \propto N^{-1/2}.
$$

\subsection{Example 3}

Let $\Omega = (-8,8)^2$, $\varepsilon = 1$, ${\bf a} = (0,0)^T$, $f_0 = 0$ and the `volcano' type initial condition be given by $u_0 = 10(x^2+y^2)\mathrm{e}^{-0.5(x^2+y^2)}$. The blow-up set for this example is a circle centred on the origin {\bf --} this induces layer type phenomena in the solution around the blow-up set as the blow-up time is approached making this example a good test of the spatial capabilities of the adaptive algorithm. Once more, we choose a small spatial threshold so that the spatial contribution to the error and the estimator are negligible. We then reduce the temporal threshold and see how far we can advance towards the blow-up time. The results are given in Table \ref{blowupdata3}.

\begin{table}[ht]
\caption{Example 3 Results} 
\centering 
\begin{tabular}{c c c c c} 
\hline\hline 
${\tt ttol^+}$ & Time Steps & Estimator & Final Time & $||U_h(T)||_{L^{\infty}(\Omega)}$ \\ 
\hline 
8 & 3 & 15 & 0.06250 & 10.371 \\ 
1 & 10 & 63 & 0.09375 & 14.194 \\
$0.125$ & 36 & 211 & 0.11979 & 21.842 \\
$(0.125)^2$ & 86 & 533 & 0.13412 & 31.446 \\
$(0.125)^3$ & 190 & 971 & 0.14388 & 45.122 \\
$(0.125)^4$ & 404 & 1358 & 0.15072 & 64.907 \\
$(0.125)^5$ & 880 & 5853 & 0.15601 & 98.048 \\
$(0.125)^6$ & 1853 & 10654 & 0.15942 & 146.162 \\
$(0.125)^7$ & 3831 & 21301 & 0.16176 & 219.423 \\
$(0.125)^8$ & 7851 & 143989 & 0.16336 & 332.849 \\
$(0.125)^9$ & 16137 & 287420 & 0.16442 & 505.236 \\
$(0.125)^{10}$ & 32846 & 331848 & 0.16512 & 769.652 \\
$(0.125)^{11}$ & 66442 & 626522 & 0.16558 & 1175.21 \\
\hline 
\end{tabular}
\label{blowupdata3}
\end{table}

Once again, the data implies that
\begin{equation}
\notag
\|U_h\|_{L^{\infty}(0,T;L^{\infty}(\Omega))} \propto N^{1/2}.
\end{equation}
Arguing as in Example 1, we again conclude that
\begin{equation}
\begin{aligned}
\notag
\lambda({\tt ttol^+},N) \propto N^{-1/2}.
\end{aligned}
\end{equation}

The numerical solution  at $t=0$ and $t=T$ obtained with the final numerical experiment (${\tt ttol^+} = (0.125)^{11}$) is shown in Figure \ref{blowupprofiles}; the corresponding  meshes are displayed in Figure \ref{blowupmeshes}. The initial mesh has a relatively homogenous distribution of elements which is to be expected since the initial condition is relatively smooth. In the final mesh, elements have been added in the vicinity of the blow-up set and removed elsewhere, notably near the origin. The distribution of elements in the final mesh strongly indicates that the adaptive algorithm is adding and removing elements in an efficient manner.

\begin{figure}
\centering
\includegraphics[scale=0.48]{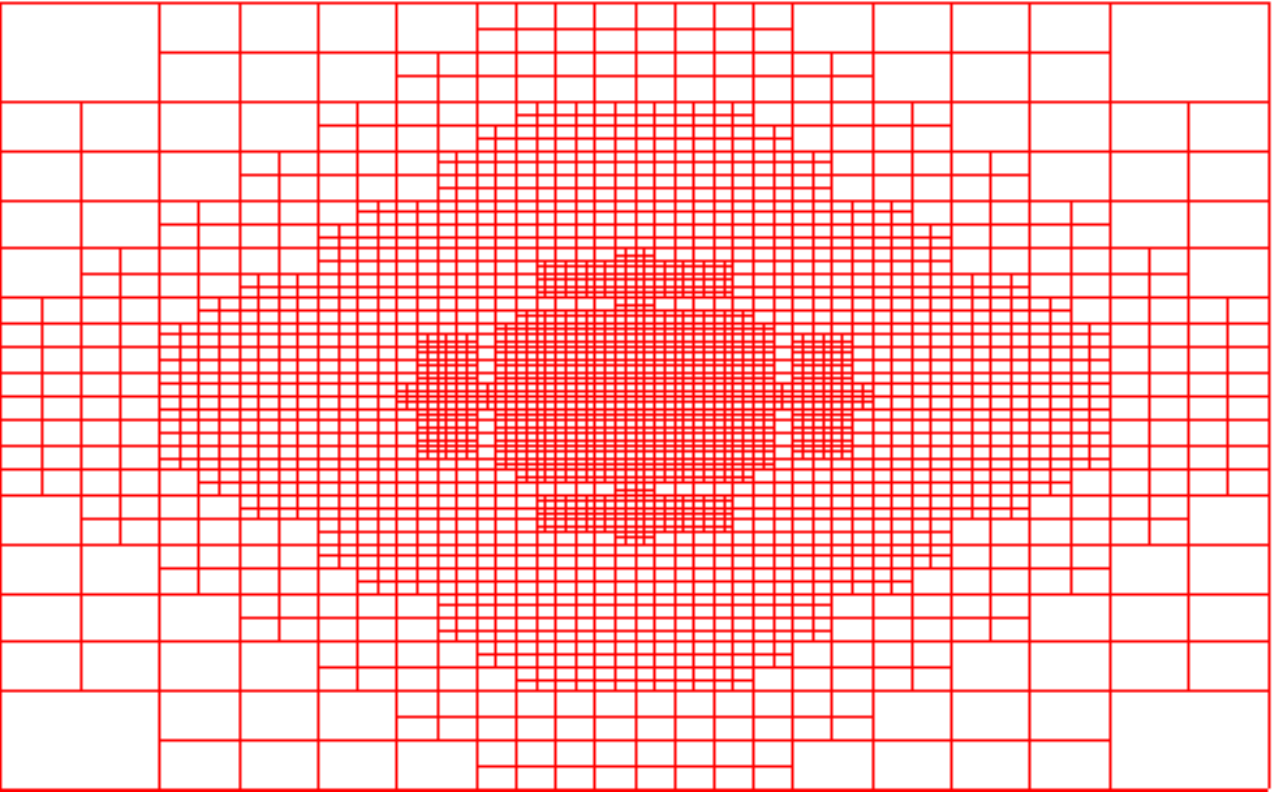} \, \includegraphics[scale=0.48]{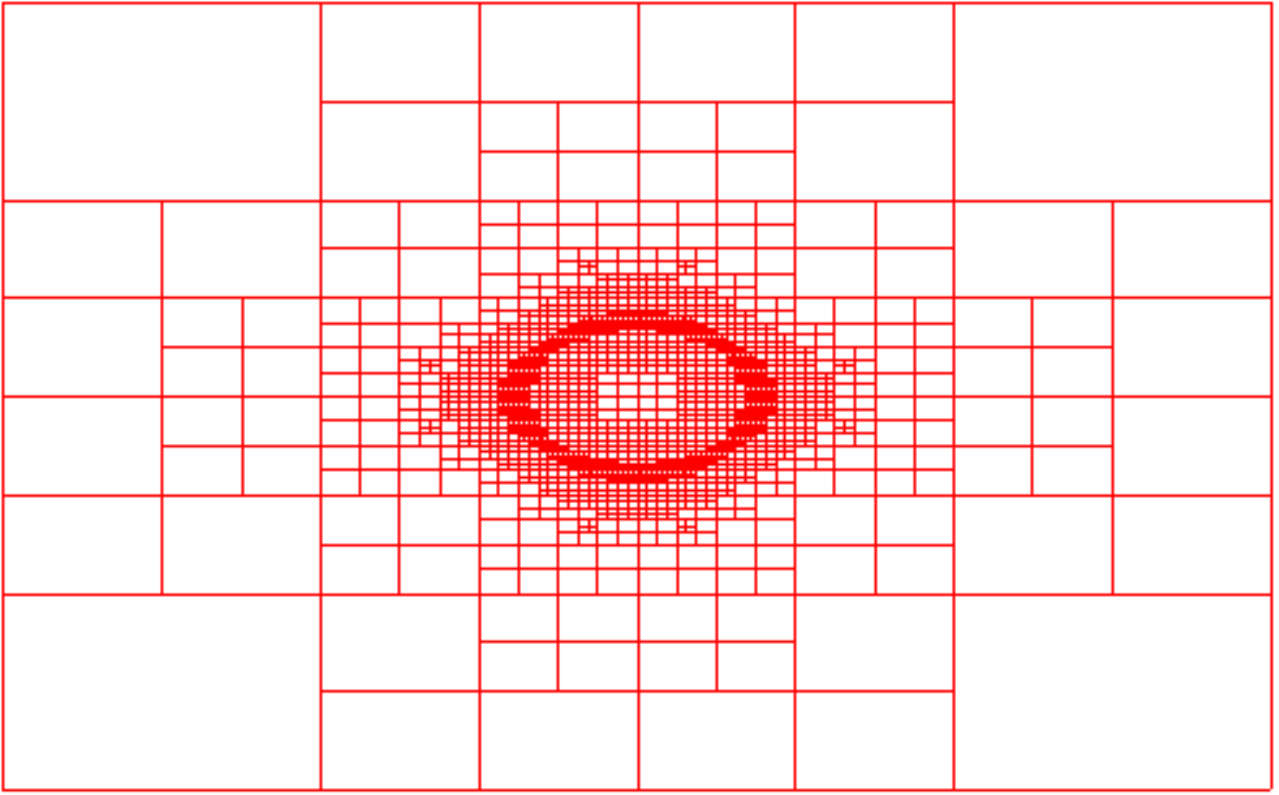}
\caption{Example 3: Initial (left) and final (right) meshes.}
\label{blowupmeshes}
\end{figure}
\begin{figure}
\centering
\includegraphics[scale=0.23]{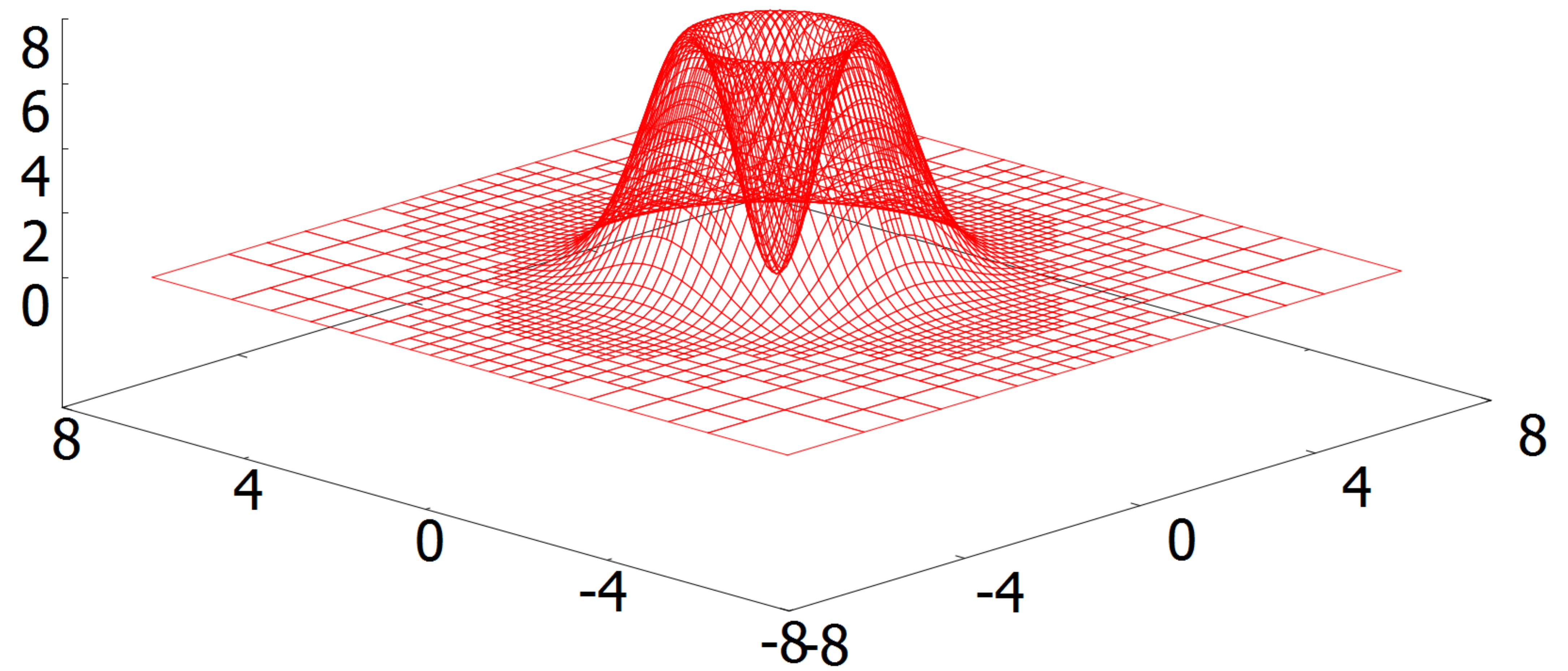} \vspace{3mm} 
\includegraphics[scale=0.23]{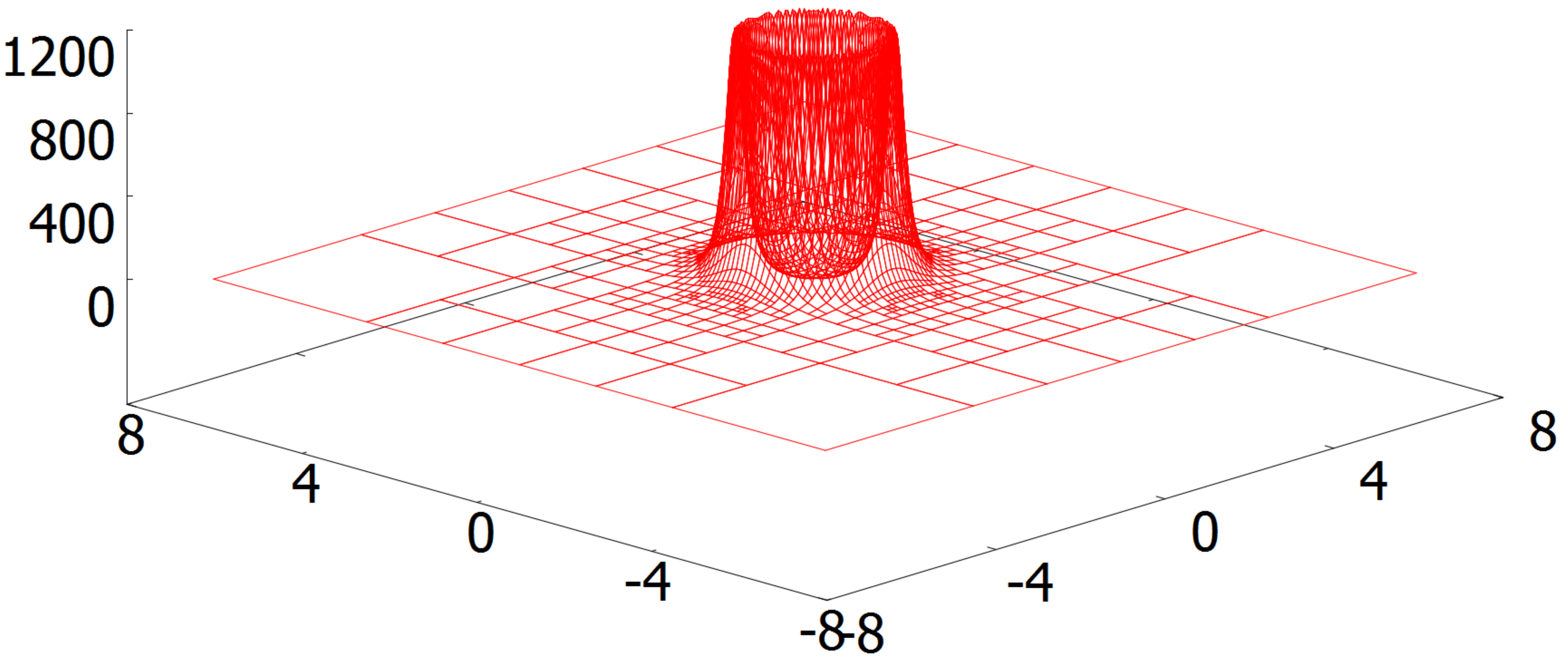}
\caption{Example 3: Initial (above) and final (below) solution profiles.}
\label{blowupprofiles}
\end{figure}

\section{Conclusions}\label{conclusions_sec}

We proposed a framework for space-time adaptivity based on rigorous a posteriori bounds for an IMEX dG discretization of a semilinear blow-up problem. The error estimator was applied to a number of test problems and appears to converge towards the blow-up time in all cases. In Section \ref{ODE_sec}, it was observed that the a posteriori error estimator for the related ODE problem with polynomial nonlinearity approaches the blow-up time with a rate of at least one for a basic Euler method. The numerical examples show that, for the PDE blow-up problem, the proposed error estimator appears to be advancing towards the blow-up time at a rate approximately half of that observed for the corresponding ODE error estimator. A possible reason for this behaviour lies in the proof of the a posteriori bound via an energy argument. Nevertheless, it is this very energy argument which delivers a \emph{practical} conditional a posteriori bound in the sense that condition \eqref{cond2} can be satisfied within a practically relevant (in terms of computational cost) mesh-parameter regime. It would be interesting to investigate the derivation of conditional a posteriori bounds for fully-discrete schemes for blow-up problems via semigroup techniques, in the spirit of \cite{K01}, although this currently remains a challenging task.

\section*{Acknowledgements}

Irene Kyza was supported in part  by the  European Social Fund (ESF) {\bf --} European Union (EU) and National Resources of the Greek State within the framework of the Action ``Supporting Postdoctoral Researchers''  of the Operational Programme ``Education and Lifelong Learning (EdLL)''. Stephen Metcalfe gratefully acknowledges the funding of the Engineering and Physical Sciences Research Council (EPSRC).  This work originated from a number of visits of the authors to the Archimedes Center for Modelling, Analysis \& Computation (ACMAC), which we gratefully acknowledge. We would also like to thank Prof. Theodoros Katsaounis of the University of Crete for suggesting the final numerical example.

\bibliographystyle{siam}
\bibliography{bibliography}

\end{document}